\documentclass[12pt,reqno]{amsart}
\usepackage{amsfonts}
\usepackage{amsmath}
\usepackage{amssymb}
\usepackage{amscd}
\usepackage{graphicx}
\usepackage{hyperref}

\setcounter{MaxMatrixCols}{10}

\newtheorem{theorem}{Theorem}[section]

\newtheorem{proposition}{Proposition}[section]
\newtheorem{corollary}{Corollary}[section]

\newtheorem{example}{Example}[section]

\numberwithin{equation}{section}
\textwidth150mm

\begin{document}
\title[Finite Blaschke Products and
Decomposition]{Finite Blaschke Products and
Decomposition}
\author{S\"{u}meyra U\c{C}AR$^{1}$}
\address{Bal\i kesir University\\
Department of Mathematics\\
10145 Bal\i kesir, TURKEY}
\email{sumeyraucar@balikesir.edu.tr}
\author{Nihal YILMAZ\ \"{O}ZG\"{U}R$^{1}$}
\address{Bal\i kesir University\\
Department of Mathematics\\
10145 Bal\i kesir, TURKEY}
\email{nihal@balikesir.edu.tr}
\thanks{$^{1}$Bal\i kesir University, Department of Mathematics, 10145 Bal\i
kesir, TURKEY}
\subjclass[2010]{Primary 30J10; Secondary 30D05.}
\keywords{Finite Blaschke products, composition of Blaschke products.}

\begin{abstract}
Let $B(z)$ be a finite Blaschke product of degree $n$. We consider the
problem when a finite Blaschke product can be written as a composition of
two nontrivial Blaschke products of lower degree related to the condition $%
B\circ M=B$ where $M$ is a M\"{o}bius transformation from the unit disk onto
itself.
\end{abstract}

\maketitle

\section{\textbf{Introduction}}

\label{intro}

It is known that a M\"{o}bius transformation from the
unit disc $\mathbb{D}$ into itself is of the following form:
\begin{equation}
M(z)=c\frac{z-\alpha }{1-\overline{\alpha }z},  \label{Mobius}
\end{equation}%
where $\alpha \in \mathbb{D}$ and $c$ is a complex constant of modulus one (see \cite{ford} and \cite{gassier}).

The rational function
\begin{equation*}
B(z)=c\underset{k=1}{\overset{n}{\prod }}\frac{z-a_{k}}{1-\overline{a_{k}}z}
\end{equation*}%
is called a finite Blaschke product of degree $n$ for the unit disc where $%
\left\vert c\right\vert =1$ and $\left\vert a_{k}\right\vert <1$, $1\leq
k\leq n$.

Blaschke products of the following form are called canonical Blaschke
products:%
\begin{equation}
B(z)=z\underset{k=1}{\overset{n-1}{\prod }}\frac{z-a_{k}}{1-\overline{a_{k}}z%
},\left\vert a_{k}\right\vert <1\text{ for }1\leq k\leq n-1.
\label{canonical}
\end{equation}

Decomposition of finite Blaschke products is an interesting matter studied
by many researcher by the use of a point $\lambda $ on the unit circle $\partial
\mathbb{D}$ and the points $z_{1},z_{2},...,z_{n}$ on the unit circle $%
\partial \mathbb{D}$ such that $B\left( z_{1}\right) =...=B\left(
z_{n}\right) =\lambda $. For example in $2011,$ using circles passing
through the origin, it was given the determination of these points for the
Blaschke products written as composition of two nontrivial Blaschke products
of lower degree $($see \cite{NYO1} and \cite{NYO2}$)$. On the other hand,
decomposition of finite Blaschke products are related to the condition $%
B\circ M=B$ where $M$ is a M\"{o}bius transformation of the form $($\ref%
{Mobius}$)$ and different from the identity $($see \cite{craighead} and \cite%
{gassier}$)$.

In this paper we consider the relationship between the following two
questions for a given canonical finite Blaschke product:

$Q1)$ Is there a M\"{o}bius transformation $B\circ M=B$ such that $M$ is
different from the identity?

$Q2)$ Can $B$ be written as a composition $B=B_{2}\circ B_{1}$ of two finite
Blaschke products of lower degree?

In Section \ref{sec:1}, we recall some necessary theorems about these
questions. In Section \ref{sec:2} we give some theorems and examples related
to the above two questions.

\section{Preliminaries}

\label{sec:1}

In this section we give some information about decomposition of finite
Blaschke products written as $B\circ M=B$ where $M$ is a M\"{o}bius
transformation different from the identity. In \cite{gassier}, it was proved
that the set of continuous functions $M$ from the unit disc into the unit
disc such that $B\circ M=B$ is a cyclic group if $B$ is a finite Blaschke
product. In \cite{craighead}, the condition $B\circ M=B$ was used in the
following theorem.

\begin{theorem}
\label{thm1}$($See \cite{craighead}, Theorem $3.1$ on page $335)$ Let $B$ be
a finite Blaschke product of degree $n.$ Suppose $M\neq I$ is holomorphic
from $\mathbb{D}$ into $\mathbb{D}$ such that $B\circ M=B.$ Then$:$

$(i)$ $M$ is a M\"{o}bius transformation,

$(ii)$ There is a positive integer $k\geq 2$ such that the iterates, $%
M,...M^{[k-1]}$ are all distinct but $M^{[k]}=I.$

$(iii)$ $k$ divides $n$.

$(iv)$ There is a $\gamma \in \mathbb{D}$ such that $M(\gamma )=\gamma .$

$(v)$ $B$ can be written as a composition $B=B_{2}\circ B_{1}$ of finite
Blaschke products with the degree $B_{1}=k$ and the degree of $B_{2}=n/k.$ $%
B_{1}$ may be taken to be
\begin{equation*}
B_{1}(z)=\left( \frac{z-\gamma }{1-\overline{\gamma }z}\right) ^{k}.
\end{equation*}
\end{theorem}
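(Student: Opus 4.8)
The plan is to prove each part of Theorem~\ref{thm1} by exploiting the group structure of the set $G=\{M : \mathbb{D}\to\mathbb{D} \text{ holomorphic}, B\circ M=B\}$, which by the cited result of \cite{gassier} is a cyclic group. First I would establish $(i)$. Since $B$ is a finite Blaschke product of degree $n$, it is an $n$-to-one proper holomorphic self-map of $\mathbb{D}$. The relation $B\circ M=B$ forces $M$ to preserve fibers of $B$, and in particular $M$ must extend continuously to a map of $\overline{\mathbb{D}}$ sending $\partial\mathbb{D}$ to $\partial\mathbb{D}$; a holomorphic self-map of $\mathbb{D}$ that maps the boundary to the boundary and is not constant must be a finite Blaschke product. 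Because $M$ has a holomorphic inverse coming from the group structure (some iterate of $M$ equals $I$, so $M$ is invertible as a self-map), its degree must be one, whence $M$ is a degree-one Blaschke product, i.e.\ a M\"{o}bius transformation of the form~(\ref{Mobius}).

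Next I would handle $(ii)$ and $(iii)$ together using the cyclic group $G$. Since $G$ is cyclic and finite (finiteness follows because the iterates $M^{[j]}$ are distinct M\"{o}bius transformations all fixing the fibers of the degree-$n$ map $B$, and there can be only finitely many such), let $k$ be the order of $M$ in $G$; then $k\geq 2$ as $M\neq I$, the iterates $M,\dots,M^{[k-1]}$ are distinct, and $M^{[k]}=I$, giving $(ii)$. For $(iii)$, the key observation is that $G$ acts freely on a generic fiber $B^{-1}(\lambda)$ for $\lambda\in\partial\mathbb{D}$: if $M^{[j]}$ fixed a point $w$ in such a fiber it would (being a M\"{o}bius transformation agreeing with the identity on the tangent data there, or by a counting argument) force $M^{[j]}=I$. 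Since a generic fiber has exactly $n$ points and the cyclic group of order $k$ acts freely on it, the orbits all have size $k$, so $k\mid n$.

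For $(iv)$, every M\"{o}bius transformation of $\mathbb{D}$ onto itself of finite order must have a fixed point inside $\mathbb{D}$: an elliptic automorphism of the disk is conjugate to a rotation $z\mapsto e^{i\theta}z$ about its unique interior fixed point, whereas parabolic and hyperbolic automorphisms have infinite order. Since $M^{[k]}=I$ with $k$ finite, $M$ is elliptic and hence fixes some $\gamma\in\mathbb{D}$. Finally, for $(v)$, I would conjugate by the disk automorphism $\varphi(z)=\frac{z-\gamma}{1-\overline{\gamma}z}$ so that $\varphi\circ M\circ\varphi^{-1}$ is the rotation $z\mapsto\omega z$ with $\omega=e^{2\pi i/k}$ a primitive $k$th root of unity. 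Setting $B_{1}(z)=\varphi(z)^{k}=\left(\frac{z-\gamma}{1-\overline{\gamma}z}\right)^{k}$, one checks $B_{1}\circ M=B_{1}$ and that $B_{1}$ identifies exactly the points of each $M$-orbit. The main obstacle will be verifying that $B$ factors through $B_{1}$: I would argue that since $B$ is constant on $M$-orbits and $B_{1}$ is a quotient realizing precisely these orbits as its fibers, the map $B_{2}:=B\circ B_{1}^{-1}$ is well-defined as a holomorphic function on $\mathbb{D}$, extends across $\partial\mathbb{D}$, and is therefore itself a finite Blaschke product; comparing degrees via $n=\deg B=\deg B_{2}\cdot\deg B_{1}=\deg B_{2}\cdot k$ gives $\deg B_{2}=n/k$. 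Making the descent of $B$ through $B_{1}$ rigorous—ensuring $B_{2}$ is single-valued and holomorphic despite $B_{1}^{-1}$ being multivalued—is the delicate point and would be the crux of the argument.
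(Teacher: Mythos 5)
This statement is quoted verbatim from Craighead--Carroll (\cite{craighead}, Theorem 3.1) and the paper gives no proof of it, so there is nothing internal to compare your argument against; I can only assess your sketch on its own terms. Your overall line (properness forces $M$ to be a Blaschke product; finite order forces $M$ elliptic; free action on a fiber gives $k\mid n$; conjugate to a rotation and descend through $z^k$) is the standard one, but three steps need repair. For $(i)$, your appeal to invertibility ``coming from the group structure'' is circular unless you take the full Gassier--Chalendar group theorem as a black box; the clean argument is that $|B(M(z))|=|B(z)|\to 1$ as $|z|\to 1$ makes $M$ a proper self-map of $\mathbb{D}$, hence a finite Blaschke product of some degree $m$, and then $n=\deg(B\circ M)=nm$ forces $m=1$. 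For $(iii)$, the parenthetical ``agreeing with the identity on the tangent data'' does not justify freeness: a M\"obius transformation can fix a single point without being the identity. The correct argument is to take the fiber over a point $\lambda\in\partial\mathbb{D}$, which consists of $n$ distinct boundary points, and to use $(iv)$ \emph{first}: a nonidentity automorphism of finite order is elliptic and its only fixed point in $\overline{\mathbb{D}}$ lies in $\mathbb{D}$, so no nontrivial iterate fixes a boundary point, the action on that fiber is free, and all orbits have size $k$.

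The real gap is $(v)$, which you yourself flag as ``the crux'' and leave open; writing $B_2:=B\circ B_1^{-1}$ with $B_1^{-1}$ multivalued is not a construction. The standard way to close it: set $C=B\circ\varphi^{-1}$ where $\varphi(z)=\frac{z-\gamma}{1-\overline{\gamma}z}$, so that $C(\omega z)=C(z)$ for $\omega=e^{2\pi i/k}$. Expanding $C$ in a Taylor series on $\mathbb{D}$, the invariance kills every coefficient $a_m$ with $k\nmid m$, so $C(z)=B_2(z^k)$ for a single-valued holomorphic $B_2$ on $\mathbb{D}$; then $B=B_2\circ\varphi^k=B_2\circ B_1$, and $B_2$ is proper (since $|B_2(z^k)|=|C(z)|\to 1$ as $|z|\to 1$ and $z\mapsto z^k$ is surjective near the boundary), hence a finite Blaschke product, of degree $n/k$ by multiplicativity of degree. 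Without this descent argument your proposal establishes $(i)$--$(iv)$ modulo the reordering above, but not $(v)$.
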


But the condition $B\circ M=B$ is not necessary for a decomposition of
finite Blaschke products $($see \cite{craighead} for more details$)$.

It follows from Theorem \ref{thm1} that if a finite Blaschke product $B$ can
be written as $B\circ M=B,$ then $B$ can be decomposed into a composition of
two finite Blaschke products of lower order. However the following theorem
gives necessary and sufficient conditions for the question $Q1.$

\begin{theorem}
\label{thm4}$($See \cite{gassier}, Proposition $4.1$ on page $202)$ Let $B$
be finite Blaschke product of degree $n\geq 1,$ $B(z)=\underset{k=1}{\overset%
{n}{\prod }}\frac{z-a_{k}}{1-\overline{a_{k}}z}$ with $a_{k}\in \mathbb{D}$
for $1\leq k\leq n.$ Let $M$ be a M\"{o}bius transformation from $\mathbb{D}$
into $\mathbb{D}$. The following assertions are equivalent$:$

$(i)$ $(B\circ M)(z)=B(z),z\in \mathbb{C},\left\vert z\right\vert \leq 1.$

$(ii)$ $M(\{a_{1},a_{2,}...,a_{n}\})=\{a_{1},a_{2,}...,a_{n}\}$ and there
exists $z_{0}\in \overline{\mathbb{D}}\backslash \{a_{1},a_{2,}...,a_{n}\}$
such that $(B\circ M)(z_{0})=B(z_{0}).$
\end{theorem}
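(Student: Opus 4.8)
The plan is to prove the two implications separately, exploiting the basic fact that for a M\"{o}bius transformation $M$ of the form (\ref{Mobius}) the composition $B\circ M$ is again a finite Blaschke product of degree $n$ whose zeros are exactly the preimages $M^{-1}(a_{k})$. Since $M$ is then a conformal bijection of $\overline{\mathbb{D}}$ with $M'$ nowhere vanishing, composition with $M$ preserves the multiplicity of each zero, which is the bookkeeping I will need throughout.

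For the implication $(i)\Rightarrow (ii)$ I would argue directly from the zeros. If $(B\circ M)(z)=B(z)$ on $\overline{\mathbb{D}}$, then the two functions have the same zeros with the same multiplicities in $\mathbb{D}$. On the other hand, $B(M(z))=0$ exactly when $M(z)\in\{a_{1},\dots,a_{n}\}$, so the zero multiset of $B\circ M$ is $\{M^{-1}(a_{1}),\dots,M^{-1}(a_{n})\}$. Equating the two multisets gives $M^{-1}(\{a_{1},\dots,a_{n}\})=\{a_{1},\dots,a_{n}\}$, hence $M(\{a_{1},\dots,a_{n}\})=\{a_{1},\dots,a_{n}\}$. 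The required point $z_{0}$ is then furnished by any $z_{0}\in\overline{\mathbb{D}}\setminus\{a_{1},\dots,a_{n}\}$, since the identity $(B\circ M)(z_{0})=B(z_{0})$ holds by hypothesis for every point.

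The converse $(ii)\Rightarrow (i)$ carries the real content. First I would record that $B\circ M$ is a finite Blaschke product of degree $n$: each factor $\frac{M(z)-a_{k}}{1-\overline{a_{k}}M(z)}$ is the composition of a Blaschke factor with the automorphism $M$, hence itself a Blaschke factor (up to a unimodular constant) vanishing precisely at $M^{-1}(a_{k})$. The hypothesis $M(\{a_{1},\dots,a_{n}\})=\{a_{1},\dots,a_{n}\}$ means $M$, and therefore $M^{-1}$, permutes the zeros, so the zero multiset $\{M^{-1}(a_{1}),\dots,M^{-1}(a_{n})\}$ of $B\circ M$ coincides with the zero multiset $\{a_{1},\dots,a_{n}\}$ of $B$. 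Two finite Blaschke products with the same zeros agree up to a unimodular constant, whence $B\circ M=\lambda B$ with $|\lambda|=1$. Evaluating at $z_{0}$ finishes the proof: because $z_{0}\notin\{a_{1},\dots,a_{n}\}$ we have $B(z_{0})\neq 0$ (it is nonzero in $\mathbb{D}$ as $z_{0}$ is not a zero, and has modulus one if $z_{0}\in\partial\mathbb{D}$), so $(B\circ M)(z_{0})=B(z_{0})$ forces $\lambda=1$ and hence $B\circ M=B$ on $\overline{\mathbb{D}}$.

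The main obstacle I anticipate is the treatment of multiplicities: the equality $M(\{a_{1},\dots,a_{n}\})=\{a_{1},\dots,a_{n}\}$ must be understood as an equality of multisets, i.e.\ $M$ must preserve the multiplicity of each zero, for the step $B\circ M=\lambda B$ to be legitimate; otherwise two distinct Blaschke products could share the same underlying set of zeros yet differ. I would therefore make the multiplicity-preserving property of the conformal map $M$ explicit at the outset. The remaining verifications --- that $B\circ M$ is a genuine Blaschke product and that matching zeros forces equality up to a unimodular factor --- are standard and require only routine manipulation of the Blaschke factors together with the maximum modulus principle.
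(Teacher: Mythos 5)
The paper does not prove this statement; it is quoted verbatim from Gassier--Chalendar (Proposition 4.1 of \cite{gassier}), so there is no in-paper argument to compare against. Your proof is correct and is essentially the standard one: $B\circ M$ is again a degree-$n$ Blaschke product with zero multiset $M^{-1}(\{a_1,\dots,a_n\})$, the permutation hypothesis forces $B\circ M=\lambda B$ with $|\lambda|=1$ by the maximum modulus principle, and the extra point $z_0$ (where $B(z_0)\neq 0$) pins down $\lambda=1$. Your caveat that $M(\{a_1,\dots,a_n\})=\{a_1,\dots,a_n\}$ must be read as an equality of multisets is well taken --- with mere set equality the implication $(ii)\Rightarrow(i)$ fails for products with repeated zeros --- and this is indeed the intended reading of $Z(B)$ in \cite{gassier}.
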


Using the following proposition given in \cite{gassier}, we know how to
construct a finite Blaschke product of degree $n\geq 1$ satisfying the
condition $B\circ M=B$ where M is a M\"{o}bius transformation from $\mathbb{D}$
into $\mathbb{D}$ different from identity.

\begin{proposition}
\label{prop4}$($See \cite{gassier}, Proposition $4.2$ on page $203)$ Let $n$
be a positive integer and let $M$ be a M\"{o}bius transformation from $%
\mathbb{D}$ into $\mathbb{D}$ such that $M^{n}(0)=0$ and $%
\{0,M(0),...,M^{n-1}(0)\}$ is a set of n distinct points in $\mathbb{D}$.
Consider the finite Blaschke product $B(z)=z$ $\underset{k=1}{\overset{n-1}{%
\prod }}\frac{z-M^{k}\left( 0\right) }{1-\overline{M^{k}\left( 0\right) }z}$%
. Then the group $G$ of the invariants of $B$ is generated by $M$.
\end{proposition}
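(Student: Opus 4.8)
The plan is to establish the two inclusions $\langle M\rangle\subseteq G$ and $G\subseteq\langle M\rangle$ by combining Theorems \ref{thm1} and \ref{thm4} with an order/counting argument. Throughout write $\mathcal{O}=\{0,M(0),\dots,M^{n-1}(0)\}$ for the orbit of $0$; by construction these are exactly the $n$ distinct zeros of $B$, and $G$ denotes the group of continuous self-maps $N$ of $\mathbb{D}$ with $B\circ N=B$. I treat the substantive case $n\ge 2$, the case $n=1$ being degenerate.

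First I would pin down the dynamics of $M$. Since $M^{n}$ is a Möbius automorphism of $\mathbb{D}$ fixing $0$, it is a rotation $z\mapsto e^{i\theta}z$, hence elliptic or the identity; because powers of a parabolic (resp. hyperbolic) automorphism remain parabolic (resp. hyperbolic), $M$ itself cannot be of those types, so $M$ is elliptic with a unique fixed point $\gamma\in\mathbb{D}$. This $\gamma$ cannot lie in $\mathcal{O}$: if $\gamma=M^{j}(0)$ then $M^{j+1}(0)=M(\gamma)=\gamma=M^{j}(0)$, contradicting the distinctness of the orbit; hence $B(\gamma)\neq 0$. Next I would verify $M\in G$. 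Because $M^{n}(0)=0$, the map $M$ sends $\mathcal{O}$ to $\{M(0),\dots,M^{n-1}(0),M^{n}(0)\}=\mathcal{O}$, so $M$ permutes the zero set of $B$. Taking $z_{0}=\gamma$, which lies in $\overline{\mathbb{D}}\setminus\mathcal{O}$ and satisfies $(B\circ M)(\gamma)=B(M(\gamma))=B(\gamma)$, Theorem \ref{thm4} applies and yields $B\circ M=B$, that is $M\in G$.

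Then comes the order computation. The orbit $\mathcal{O}$ has exactly $n$ points and returns to $0$ first at step $n$, so the least $j\ge 1$ with $M^{j}(0)=0$ equals $n$; consequently any relation $M^{k}=I$ forces $n\mid k$. On the other hand, since $M\in G$ and $M\neq I$, Theorem \ref{thm1}(ii) and (iii) furnish a least $k\ge 2$ with $M^{[k]}=I$ and $k\mid n$. Combining $n\mid k$ with $k\mid n$ shows that $M$ has order exactly $n$, so $\langle M\rangle\subseteq G$ is cyclic of order $n$. Finally I would close by counting: by the result of \cite{gassier} recalled in Section \ref{sec:1}, $G$ is cyclic, say $G=\langle N\rangle$; and since every nontrivial element of $G$ has finite order dividing $n$ by Theorem \ref{thm1}, the group $G$ is finite with $\lvert G\rvert\mid n$, so $\lvert G\rvert\le n$. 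As $\langle M\rangle\subseteq G$ already has order $n$, we conclude $\lvert G\rvert=n=\lvert\langle M\rangle\rvert$ and therefore $G=\langle M\rangle$.

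I expect the main obstacle to be the order computation in the third step, specifically guaranteeing that $M$ has order exactly $n$ rather than some proper divisor. This is precisely where the hypothesis that the $n$ points of $\mathcal{O}$ are distinct is indispensable: it forces the orbit period of $0$ to be $n$, which supplies the divisibility $n\mid k$ complementing Craighead's $k\mid n$ from Theorem \ref{thm1}. Once both divisibilities are in hand, the remaining steps are routine applications of Theorems \ref{thm1} and \ref{thm4} together with the cyclicity of $G$.
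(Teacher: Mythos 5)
The paper offers no proof of Proposition \ref{prop4}; it is quoted verbatim from \cite{gassier} (Proposition 4.2 there), so there is no internal argument to compare yours against. Judged on its own, your derivation from the other results quoted in the paper is correct and well structured: identifying $M$ as elliptic with fixed point $\gamma\in\mathbb{D}$ gives you a legitimate test point $z_{0}=\gamma$ outside the zero set, so Theorem \ref{thm4} yields $M\in G$; the distinctness of the orbit of $0$ forces $n$ to divide the order of $M$, while Theorem \ref{thm1}(iii) gives the reverse divisibility, so $\langle M\rangle$ has order exactly $n$; and cyclicity of $G$ together with $|G|$ dividing $n$ closes the count. Two points deserve tightening. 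First, in ruling out $\gamma\in\{0,M(0),\dots,M^{n-1}(0)\}$, the case $j=n-1$ does not contradict the distinctness of two \emph{listed} orbit points directly; you need the wrap-around $M^{n}(0)=0$ to conclude $M^{n-1}(0)=0$, which then contradicts distinctness. Second, and more substantively, your final step applies Theorem \ref{thm1} to the generator $N$ of $G$, which requires $N$ to be holomorphic, whereas the version of the Gassier result recalled in Section \ref{sec:1} only asserts that the \emph{continuous} invariants form a cyclic group; you should either invoke the stronger form of \cite{gassier} (every invariant is automatically a M\"obius transformation) or define $G$ as the group of holomorphic invariants from the outset. With those two clarifications your proof is complete, and it is a reasonable reconstruction of the kind of argument the cited source gives.
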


Let $p$ be a prime number. Using Theorem \ref{thm1}, it is clear that for a
Blaschke product of degree $p$, it is impossible $B=B_{2}\circ B_{1}.$

From \cite{horwitz}, we know the following theorem and we will use this
theorem in the next chapter.

\begin{theorem}
\label{thm7}Let%
\begin{equation*}
A\left( z\right) =\underset{k=1}{\overset{n}{\prod }}\frac{z-a_{k}}{1-%
\overline{a_{k}}z}\text{ and }B\left( z\right) =\underset{k=1}{\overset{n}{%
\prod }}\frac{z-b_{k}}{1-\overline{b_{k}}z}
\end{equation*}%
with $a_{k}$ and $b_{k}\epsilon \mathbb{D=\{}\left\vert z\right\vert <1%
\mathbb{\}}$ for $j=1,2,...,n.$ Suppose that $A\left( \lambda _{j}\right)
=B\left( \lambda _{j}\right) $ for $n$ distinct points $\lambda
_{1},...,\lambda _{n}$ in $\mathbb{D}$. Then $A\equiv B.$
\end{theorem}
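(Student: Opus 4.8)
The plan is to argue by contradiction: assume $A\not\equiv B$ and show that so many interpolation conditions are forced that they cannot be met by polynomials of the relevant degree. The engine is the reflection identity satisfied by every finite Blaschke product, $\overline{A(1/\overline{z})}=1/A(z)$ and likewise for $B$ (this follows factor by factor from $\overline{b_{a}(1/\overline{z})}=1/b_{a}(z)$ for a single Blaschke factor $b_{a}(z)=\frac{z-a}{1-\overline{a}z}$). First I would record that this identity converts each interior coincidence into an exterior one: if $A(\lambda_{j})=B(\lambda_{j})$ with $\lambda_{j}\in\mathbb{D}\setminus\{0\}$, then evaluating the identity at $z=1/\overline{\lambda_{j}}$ gives $A(1/\overline{\lambda_{j}})=B(1/\overline{\lambda_{j}})$. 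Thus $A$ and $B$ agree at the $2n$ points $\{\lambda_{j}\}\cup\{1/\overline{\lambda_{j}}\}$, which are distinct and lie off $\partial\mathbb{D}$ (the $n$ original points inside, their $n$ reflections outside); the case $\lambda_{j}=0$ is handled by letting the reflected point be $\infty$ and reading the coincidence off the leading coefficients.

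Next I would pass to polynomials. Writing $A=P/P^{\ast}$ and $B=R/R^{\ast}$ with $P(z)=\prod_{k}(z-a_{k})$, $P^{\ast}(z)=\prod_{k}(1-\overline{a_{k}}z)=z^{n}\overline{P(1/\overline{z})}$, and similarly for $R,R^{\ast}$, the coincidence $A=B$ at a point is equivalent to the vanishing of
\[
F(z):=P(z)R^{\ast}(z)-R(z)P^{\ast}(z),\qquad \deg F\le 2n .
\]
By the first step, $F$ vanishes at the $2n$ points above. A short computation with the $\ast$-operation gives $F^{\ast}=-F$, i.e.\ $F$ is anti-self-reciprocal; this is exactly the algebraic shadow of the reflection symmetry, and it already guarantees that the roots of $F$ occur in reflected pairs $\{\zeta,1/\overline{\zeta}\}$, consistent with the first step.

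The remaining task is to conclude $F\equiv 0$, and this is where the real difficulty lies. Having $2n$ roots for a polynomial of degree at most $2n$ is one condition short of forcing $F\equiv 0$: the leading coefficient of $F$ is $(-1)^{n}\bigl(\overline{\prod b_{k}}-\overline{\prod a_{k}}\bigr)$, which need not vanish, so pure root-counting leaves a single residual degree of freedom. The main obstacle is to eliminate this last parameter using that the $\lambda_{j}$ lie strictly inside $\mathbb{D}$ (equivalently, that $A,B$ carry no unimodular constant). Concretely, I would write $F=\kappa\prod_{j}(z-\lambda_{j})(z-1/\overline{\lambda_{j}})$, impose both the anti-self-reciprocity $F^{\ast}=-F$ and the Vieta relations matching this factorization to $PR^{\ast}-RP^{\ast}$, and show that the resulting system is over-determined precisely because $|\lambda_{j}|<1$; already in degree one this reduces to a linear system whose determinant is $1-|w\lambda|^{2}>0$, forcing $\kappa=0$. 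An alternative route that sidesteps the borderline count is to prove first the slightly weaker statement that two finite Blaschke products of degree $n$ agreeing at $n+1$ interior points must coincide — there the reflection step produces $2n+2>2n$ roots and $F\equiv 0$ follows immediately — and then to recover the sharp $n$-point statement by induction on $n$: compose with the Möbius map $\tau(\zeta)=\frac{\zeta-w}{1-\overline{w}\zeta}$ sending the common value $w=A(\lambda_{n})$ to $0$, factor the common zero $b_{\lambda_{n}}$ out of $\tau\circ A$ and $\tau\circ B$, and apply the inductive hypothesis to the resulting degree $n-1$ quotients, which still agree at $\lambda_{1},\dots,\lambda_{n-1}$. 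I expect the care needed to track the unimodular normalization through this reduction, so that the borderline monic $n$-point case actually closes, to be the crux of the argument.
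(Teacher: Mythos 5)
The paper offers no proof of this statement --- it is quoted directly from Horwitz and Rubel --- so your argument must stand on its own, and as written it does not close. Your setup is correct and is the natural one: the reflection identity $\overline{A(1/\overline{z})}=1/A(z)$ does double the $n$ interior coincidences into $2n$ coincidences, the polynomial $F=PR^{\ast}-RP^{\ast}$ has degree at most $2n$ and satisfies $F^{\ast}=-F$, and you rightly identify that $2n$ roots against degree $2n$ leaves one residual parameter $\kappa$. But the step that kills $\kappa$ is exactly the theorem, and neither of your two sketches supplies it. For the first: writing $G(z)=\prod_{j}(z-\lambda_{j})(z-1/\overline{\lambda_{j}})$, one computes $G^{\ast}=\mu G$ with $\mu=\prod_{j}\overline{\lambda_{j}}/\lambda_{j}$ unimodular, so imposing $F^{\ast}=-F$ on $F=\kappa G$ reduces to $\overline{\kappa}\mu=-\kappa$, which fixes only the argument of $\kappa$ and admits a whole ray of nonzero solutions; the hypothesis $|\lambda_{j}|<1$ never enters, and indeed nonzero anti-self-reciprocal polynomials with $n$ roots inside and $n$ outside $\partial\mathbb{D}$ exist in every degree. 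The missing input is that $F$ comes from two \emph{monic} products whose zeros all lie in $\mathbb{D}$; already for $n=1$ the real content is the inequality $|\mathrm{Im}(a\overline{b})|\le|a-b|$ for $|a|,|b|<1$, which forces both roots of $F$ onto $\partial\mathbb{D}$ --- a computation quite different from the ``linear system with determinant $1-|w\lambda|^{2}$'' you gesture at, and one you would need to generalize.

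Your second route founders on the point you yourself flag. Composing with $\tau(\zeta)=\frac{\zeta-w}{1-\overline{w}\zeta}$, $w=A(\lambda_{n})$, gives $\tau\circ A=c_{A}\prod_{k}\frac{z-\alpha_{k}}{1-\overline{\alpha_{k}}z}$ with $c_{A}=\bigl(1-w\overline{A(0)}\bigr)/\bigl(1-\overline{w}A(0)\bigr)$, and similarly $c_{B}$ with $B(0)$ in place of $A(0)$; since $A(0)=B(0)$ is part of what is to be proved, there is no a priori reason that $c_{A}=c_{B}$, so after cancelling the common factor $\frac{z-\lambda_{n}}{1-\overline{\lambda_{n}}z}$ the degree-$(n-1)$ quotients are not both monic and the inductive hypothesis does not apply. (The monic hypothesis is genuinely load-bearing: for $n=1$ one can choose unimodular $c\neq1$ and $b\neq0$ so that $c\frac{z-b}{1-\overline{b}z}$ agrees with $z$ at an interior point, so any proof must use it somewhere.) In short: the reflection step and the $(n+1)$-point weak version are fine, but the passage from $n+1$ to $n$ points --- the entire force of the theorem --- is acknowledged rather than proved, so the proposal has a genuine gap.
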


\section{\textbf{Blaschke Products of Degree }$\mathbf{n}$}

\label{sec:2}

Let $B$ be a canonical Blaschke product of degree $n$ and following \cite%
{gassier}, let $Z(B)$ denotes the elements $z\in \mathbb{D}$ such that $%
B(z)=0.$ In this section we consider the relationship between the questions
(Q1) and (Q2).

Now we give the following theorem for the Blaschke products of degree $n$.

\begin{theorem}
\label{thm6}Let $M\left( z\right) =c\frac{z-\alpha }{1-\overline{\alpha }z}$
be a M\"{o}bius transformation different from the identity from the unit
disc into itself and $B(z)=z\underset{k=1}{\overset{n-1}{\prod }}\frac{%
z-a_{k}}{1-\overline{a_{k}}z}$ be a canonical Blaschke product of degree $n$%
. Then $B\circ M=B$ if and only if $M(z)=c\frac{z-a_{j}}{1-\overline{a_{j}}z}
$ with $\left\vert a_{j}\right\vert =\left\vert a_{l}\right\vert $ for some $%
a_{j},a_{l}$ $(0\leq j,l\leq n-1),$ $c=-\frac{a_{j}}{a_{l}}$ and the equation%
\begin{equation}
M^{n-1}\left( 0\right) -a_{i}=0\text{, }\left( 1\leq i\leq n-1\right)
\label{eqn6}
\end{equation}%
is satisfied by the non-zero zeros of $B$.
\end{theorem}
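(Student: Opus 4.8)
The plan is to prove both implications using the structural results recalled in Section \ref{sec:1}, the main tools being Theorem \ref{thm4} (which identifies the relation $B\circ M=B$ with the statement that $M$ permutes the zero set together with one extra coincidence point) and Proposition \ref{prop4} (which builds an $M$-invariant Blaschke product out of the forward orbit of $0$). Throughout I would write $Z(B)=\{0,a_1,\dots,a_{n-1}\}$ and treat $0$ as $a_0$, so that the index ranges $0\leq j,l\leq n-1$ in the statement are literally accommodated.

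For the forward direction I would assume $B\circ M=B$. Since $B$ is canonical, $0\in Z(B)$, and by Theorem \ref{thm4} the map $M$ sends $Z(B)$ bijectively onto itself. First I would locate $\alpha$: because $M$ has the form \eqref{Mobius}, the unique solution of $M(\alpha)=0$ is $z=\alpha$; as $0\in Z(B)$ lies in the image and $M$ permutes $Z(B)$, its preimage $\alpha$ must itself be a zero, so $\alpha=a_j$ for some $j$, forcing
\[
M(z)=c\,\frac{z-a_j}{1-\overline{a_j}z}.
\]
Next I would evaluate at $0$: since $M(0)=-c\,a_j$ is again a point of $Z(B)$, I write $M(0)=a_l$, solve for $c$ to obtain the stated ratio $c=-a_j/a_l$ (up to the labelling of $j$ and $l$), and take moduli; using $|c|=1$ this yields $|a_j|=|a_l|$. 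Finally, iterating $M$ on $0$ and using that the invariant group is finite and cyclic (Theorem \ref{thm1} and the cyclicity statement of \cite{gassier}), the non-zero zeros are recovered as the forward orbit $M(0),M^2(0),\dots,M^{n-1}(0)$, and closure of this orbit is exactly the condition $M^{n}(0)=0$, that is, equation \eqref{eqn6}.

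For the converse I would assume $M$ has the stated form with $|a_j|=|a_l|$, $c=-a_j/a_l$, and that \eqref{eqn6} holds. These hypotheses say precisely that $a_1,\dots,a_{n-1}$ coincide with the distinct points $M(0),M^2(0),\dots,M^{n-1}(0)$ and that $M^{n}(0)=0$. Hence $B$ is exactly the Blaschke product attached to the forward orbit of $0$ under $M$, so Proposition \ref{prop4} applies and the group of invariants of $B$ is generated by $M$; in particular $B\circ M=B$. As an alternative route I would check the two conditions of Theorem \ref{thm4} by hand: the orbit description makes $M$ permute $Z(B)$, and taking $z_0$ to be the interior fixed point of $M$ (its existence is Theorem \ref{thm1}$(iv)$) supplies the required extra coincidence $(B\circ M)(z_0)=B(z_0)$.

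The main obstacle I anticipate is the iteration bookkeeping in the forward direction: showing that the non-zero zeros are exactly the forward orbit of $0$ and that this orbit closes as $M^{n}(0)=0$, rather than $M$ splitting $Z(B)$ into several shorter cycles. Controlling this is what makes \eqref{eqn6} hold for all the non-zero zeros simultaneously, and it is where the finiteness and cyclicity of the invariant group (Theorem \ref{thm1}$(ii)$--$(iii)$ and \cite{gassier}) must be used carefully. By contrast, the determination of $c$ and the modulus relation $|a_j|=|a_l|$ are short computations once $\alpha=a_j$ and $M(0)=a_l$ have been pinned down.
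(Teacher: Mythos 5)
Your overall skeleton matches the paper's for the forward direction: both arguments pin down $\alpha$ as a zero of $B$, evaluate $M$ at $0$ to get $c=-a_j/a_l$, take moduli to get $\left\vert a_j\right\vert=\left\vert a_l\right\vert$, and then identify the non-zero zeros with the forward orbit of $0$. The one real divergence is the converse: you prove it either by invoking Proposition \ref{prop4} directly (the hypotheses exhibit $B$ as the orbit product of $0$ under $M$, so $M$ generates the invariant group) or by verifying the two conditions of Theorem \ref{thm4} at the interior fixed point of $M$, whereas the paper instead observes that $B\circ M$ and $B$ agree at the $n$ points $0,a_1,\dots,a_{n-1}$ of $\mathbb{D}$ and applies the Horwitz--Rubel uniqueness theorem (Theorem \ref{thm7}). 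Your Proposition-\ref{prop4} route is arguably cleaner, since it avoids the question of whether $B\circ M$ is in the ``monic'' normal form that Theorem \ref{thm7} requires; both routes, like the paper's, silently need the points $0,M(0),\dots,M^{n-1}(0)$ to be distinct.

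The obstacle you flag in the forward direction is a genuine gap, and you should know that the paper does not close it either: it simply cites Proposition \ref{prop4} to assert $Z(B)=\{0,M(0),\dots,M^{n-1}(0)\}$ and $M^{n}(0)=0$, but that proposition runs in the opposite direction (it constructs $B$ from a prescribed orbit) and gives no such conclusion here. What Theorem \ref{thm1} actually yields is that the order $k$ of $M$ divides $n$; if $k<n$, then $M$ splits $Z(B)$ into $n/k$ cycles of length $k$ and the orbit of $0$ does \emph{not} exhaust the zero set, so equation \eqref{eqn6} fails. This is not a hypothetical: the paper's own Corollary \ref{corol1}$(ii)(a)$ exhibits a degree-$4$ canonical $B$ with $B\circ M=B$ for $M(z)=-\frac{z-a_1}{1-\overline{a_1}z}$ of order $2$, where $Z(B)$ is a union of two $2$-cycles. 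So the forward implication as stated needs either an extra hypothesis (that $M$ generates the full invariant group, equivalently $k=n$) or a restriction to prime $n$; your instinct that ``controlling this is where the cyclicity must be used carefully'' is exactly right, and no amount of care with the cited results alone will rescue the statement in the composite-degree case.
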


\begin{proof}
$(\Rightarrow ):$ Let $M\left( z\right) =c\frac{z-\alpha }{1-\overline{%
\alpha }z}$ be a M\"{o}bius transformation different from the identity from
the unit disc into itself, $B(z)=z\underset{k=1}{\overset{n-1}{\prod }}\frac{%
z-a_{k}}{1-\overline{a_{k}}z}$ be a canonical Blaschke product of degree $n$
and $B\circ M=B.$ From Proposition \ref{prop4}, we know $Z(B)=%
\{0,M(0),...,M^{n-1}(0)\}$ and $M^{n}(0)=0$. Without loss of generality, let
us take $a_{1}=M(0)$. Then we find%
\begin{equation}
a_{1}=-c\alpha  \label{eqn10}
\end{equation}%
Let $a_{j}=M^{j}\left( 0\right) $ $\left( 2\leq j-1\leq n-1\right) $ and
then we find the following equations:%
\begin{equation*}
a_{2}=M^{2}\left( 0\right) =-\frac{c\alpha \left( 1+c\right) }{1+c\left\vert
\alpha \right\vert ^{2}},\text{ }a_{3}=M^{3}\left( 0\right)
\end{equation*}%
\begin{equation*}
\vdots
\end{equation*}%
\begin{equation}
a_{n-1}=M^{n-1}\left( 0\right) .  \label{eqn11}
\end{equation}%
By Theorem \ref{thm1}, since the degree of $M$ divides the degree of $B$,
then it should be $M^{n}(0)=0.$ Using the equation $($\ref{eqn11}$)$ we have%
\begin{equation*}
M^{n}\left( 0\right) =M\left( M^{n-1}\left( 0\right) \right) =M\left(
a_{n-1}\right) =0
\end{equation*}%
and so we get%
\begin{equation*}
a_{n-1}=\alpha .
\end{equation*}

By the equation $($\ref{eqn10}$)$ we find $c=-\frac{a_{1}}{a_{n-1}}$ and
hence $\left\vert a_{1}\right\vert =\left\vert a_{n-1}\right\vert $. If we
take $a_{1}=a_{j}$ and $a_{n-1}=a_{l}$ then the proof follows.

$(\Longleftarrow ):$ For the points $0$ and $a_{k}$ $(1\leq k\leq n-1)$ in $%
\mathbb{D}$, we have
\begin{equation*}
\left( B\circ M\right) \left( 0\right) =B\left( 0\right) \text{ and }\left(
B\circ M\right) \left( a_{k}\right) =B\left( a_{k}\right) ,
\end{equation*}%
by the equation $($\ref{eqn6}$).$ Then, by Theorem \ref{thm7} we obtain%
\begin{equation*}
B\circ M\equiv B.
\end{equation*}
\end{proof}

From \cite{gassier}, we know the following corollary for the Blaschke
products of degree $3$ such that $B\circ M=B.$

\begin{corollary}
\label{corol3}$($See \cite{gassier}, page $205)$ Let $G$ be a cyclic group
which is composed of the transformations $M$ such that $B\circ M=B$. Then we
have the following assertions$:$

$(i)$ If $B(z)=z^{3},$ $G$ is generated by $M(z)=e^{2i\pi /3}z,$ $z\in
\overline{\mathbb{D}}.$

$(ii)$ If $Z(B)$ contains a non-zero point in $\mathbb{D}$, $B(z)=z\frac{%
z-a_{1}}{1-\overline{a_{1}}z}\frac{z+\overline{c}a_{1}}{1+c\overline{a_{1}}z}
$ where $\alpha \in \mathbb{D}\backslash \{0\}$ and where $c+\overline{c}%
=-1-\left\vert \alpha \right\vert ^{2}.$ In this case the group $G$ is
generated by $M(z)=c\frac{z+\overline{c}a_{1}}{1+c\overline{a_{1}}z}.$
\end{corollary}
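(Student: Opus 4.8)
The plan is to use that $\deg B = 3$ is prime, together with Theorem \ref{thm1} and Proposition \ref{prop4}. Since any integer $k \geq 2$ with $M^{[k]} = I$ must divide $n = 3$, the only possibility is $k = 3$; thus a nontrivial $M$ satisfying $B \circ M = B$ has order exactly three, and Proposition \ref{prop4} forces $Z(B) = \{0, M(0), M^{2}(0)\}$ with $M^{3}(0) = 0$. I would then split the argument according to whether these three orbit points collapse to a single point or not, which is precisely the dichotomy (i)/(ii).

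For case (i), suppose $Z(B) = \{0\}$, so $B(z) = z^{3}$. Here Proposition \ref{prop4} does not apply directly (its hypothesis requires $n$ distinct points), so I would argue by hand: $B \circ M = B$ reads $M(z)^{3} = z^{3}$ on $\overline{\mathbb{D}}$, whence $M(z)/z$ is a continuous cube root of unity, hence a constant $\omega$ with $\omega^{3} = 1$. The nontrivial solutions are $M(z) = e^{\pm 2i\pi/3}z$, and both generate the same cyclic group, so $G = \langle e^{2i\pi/3}z \rangle$.

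For case (ii), I would apply Theorem \ref{thm6} with $n = 3$. Writing $a_{1} = M(0)$ and $a_{2} = M^{2}(0)$, the proof of Theorem \ref{thm6} gives $\alpha = a_{2}$, $a_{1} = -c\alpha$, and $c = -a_{1}/a_{2}$, so that $M(z) = c\frac{z-a_{2}}{1-\overline{a_{2}}z}$. Using $|c| = 1$ (hence $1/c = \overline{c}$) I would substitute $a_{2} = \alpha = -\overline{c}a_{1}$ to rewrite both $M$ and the third factor of $B$; a short computation turns these into $M(z) = c\frac{z+\overline{c}a_{1}}{1+c\overline{a_{1}}z}$ and $B(z) = z\frac{z-a_{1}}{1-\overline{a_{1}}z}\frac{z+\overline{c}a_{1}}{1+c\overline{a_{1}}z}$, matching the stated forms. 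Proposition \ref{prop4} then gives that this $M$ generates the full invariant group $G$.

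The main work, and the step I expect to be the genuine obstacle, is extracting the scalar relation $c + \overline{c} = -1 - |\alpha|^{2}$. This does \emph{not} come from $M^{3}(0) = 0$, which holds automatically since $M(\alpha) = 0$; rather it is the consistency condition $M^{2}(0) = \alpha$, i.e. $M(a_{1}) = \alpha$. Substituting $a_{1} = -c\alpha$ into $M(a_{1}) = c\frac{a_{1}-\alpha}{1-\overline{\alpha}a_{1}}$ and equating the result to $\alpha$, then cancelling the common factor $\alpha \neq 0$ and dividing by $c$ while using $1/c = \overline{c}$, collapses exactly to $c + \overline{c} = -1 - |\alpha|^{2}$, which completes the proof.
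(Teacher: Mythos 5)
Your argument is correct, but there is nothing in the paper to compare it against: Corollary \ref{corol3} is quoted verbatim from Gassier--Chalendar (\cite{gassier}, p.\ 205) and the paper supplies no proof of it. What you have written is in substance the $n=3$ specialization of the paper's own proof of Theorem \ref{thm6}, and your scalar relation is exactly the content of the paper's Corollary \ref{corol4}: dividing $c^{2}+c(1+\left\vert a\right\vert ^{2})+1=0$ by $c$ and using $1/c=\overline{c}$ recovers $c+\overline{c}=-1-\left\vert \alpha \right\vert ^{2}$ (and $\left\vert \alpha \right\vert =\left\vert a_{1}\right\vert$ since $a_{1}=-c\alpha$ with $\left\vert c\right\vert =1$, which also resolves the $\alpha$ versus $a_{1}$ notational slip in the statement). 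Two minor points. First, Proposition \ref{prop4} runs in the converse direction (it constructs $B$ from a prescribed orbit of $M$); the fact you actually need --- that a nontrivial $M$ with $B\circ M=B$ permutes $Z(B)$ with multiplicities and, having order exactly $3$ by Theorem \ref{thm1}(ii)--(iii), must act as a $3$-cycle so that $Z(B)=\{0,M(0),M^{2}(0)\}$ --- comes from Theorem \ref{thm4}(ii) plus the observation that a repeated zero would force $M$ to fix two points of $\mathbb{D}$ and hence be the identity. The paper's own proof of Theorem \ref{thm6} cites Proposition \ref{prop4} in the same loose way, so this is cosmetic. Second, your remark that $M^{3}(0)=0$ ``holds automatically'' is misphrased --- it holds precisely because $M^{2}(0)=\alpha$ and $M(\alpha)=0$ --- but you then impose and exploit exactly the right condition $M^{2}(0)=\alpha$, and the resulting computation is correct; case (i) is likewise fine once $M(0)=0$ forces $M(z)=cz$ with $c^{3}=1$.
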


However, as an application of Theorem \ref{thm6}, we give the following
corollary in our form for degree $3$.

\begin{corollary}
\label{corol4}Let $M\left( z\right) =c\frac{z-\alpha }{1-\overline{\alpha }z}
$ be a M\"{o}bius transformation different from the identity from the unit
disc into itself and $B(z)=z\frac{z-a}{1-\overline{a}z}\frac{z-b}{1-%
\overline{b}z}$ be a Blaschke product of degree $3$. Then $B\circ M=B$ if
and only if $M(z)=c\frac{z-b}{1-\overline{b}z}$ with $\left\vert
a\right\vert =\left\vert b\right\vert $ and $c$ is a root of the equation $%
c^{2}+c(1+\left\vert a\right\vert ^{2})+1=0.$
\end{corollary}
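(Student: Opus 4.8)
The plan is to deduce Corollary \ref{corol4} as a direct specialization of Theorem \ref{thm6} to the case $n=3$. Since $B$ has degree $3$, its non-zero zeros are $a$ and $b$, so the relevant index set in the theorem is $\{a,b\}$. By Theorem \ref{thm6}, the condition $B\circ M=B$ forces $M(z)=c\frac{z-a_j}{1-\overline{a_j}z}$ for one of the non-zero zeros, together with the modulus condition $\left\vert a_j\right\vert=\left\vert a_l\right\vert$ on the two non-zero zeros and the multiplier formula $c=-a_j/a_l$. First I would fix the labeling: take $M(z)=c\frac{z-b}{1-\overline{b}z}$, i.e. $\alpha=b$, which by the theorem means $a_1=M(0)=-c\,b$ should be identified with the remaining non-zero zero $a$. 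This immediately gives the relation $a=-cb$ and the modulus equality $\left\vert a\right\vert=\left\vert b\right\vert$, matching the statement.

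Next I would extract the equation for $c$. The cleanest route is to use the closure-of-zeros structure from Proposition \ref{prop4}: for $n=3$ we need $M^3(0)=0$ with $\{0,M(0),M^2(0)\}$ the three distinct zeros $\{0,a,b\}$. Here $M(0)=-c\alpha=-cb=a$ and $M^2(0)$ must equal the last zero $b=\alpha$, i.e. $M^2(0)=\alpha$, equivalently $M(M(0))=M(a)=\alpha=b$. Writing out $M(a)=c\frac{a-b}{1-\overline{b}a}=b$ and substituting $a=-cb$ gives a single scalar equation in $c$ and $\left\vert b\right\vert$. After clearing denominators and using $\left\vert a\right\vert=\left\vert b\right\vert$, this should collapse to $c^2+c(1+\left\vert a\right\vert^2)+1=0$. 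I would carry out this substitution explicitly, since it is the computational heart of the corollary.

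The main obstacle I anticipate is purely algebraic bookkeeping: one must verify that the equation $M^2(0)=\alpha$ (or equivalently that $M^3(0)=0$ with the correct cycle of zeros) reduces exactly to the quoted quadratic, and in particular that the cross terms involving $\overline{b}$ and $\left\vert b\right\vert^2$ combine so that only the modulus $\left\vert a\right\vert^2$ survives as a real coefficient. I would exploit $a=-cb$ to replace $a$ throughout, then multiply out and collect powers of $c$; the condition $\left\vert a\right\vert=\left\vert b\right\vert$ guarantees $\alpha=b$ lies in $\mathbb{D}$ and keeps the computation consistent. For the converse direction I would not need a separate argument beyond noting that the corollary's hypotheses are exactly the specialization of the equivalent conditions in Theorem \ref{thm6}, so both implications follow at once from that theorem together with the verified reduction to the quadratic in $c$.
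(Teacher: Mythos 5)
Your proposal is correct and follows exactly the route the paper intends: the corollary is stated as a direct application of Theorem \ref{thm6} with $n=3$, and your reduction of the cycle condition $M^{2}(0)=\alpha=b$ (using $a=M(0)=-cb$) to $c^{2}+c(1+\left\vert a\right\vert ^{2})+1=0$ is the computation the paper leaves implicit. The resulting quadratic also agrees with the condition $c+\overline{c}=-1-\left\vert \alpha\right\vert ^{2}$ of Corollary \ref{corol3}, which confirms your bookkeeping.
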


As an other application of Theorem \ref{thm6}, a similar corollary can be given for the Blaschke products of prime
degrees. We give the following corollaries and examples for degree $5$ and $7$.

\begin{corollary}
Let $M\left( z\right) =c\frac{z-\alpha }{1-\overline{\alpha }z}$ be a M\"{o}%
bius transformation different from the identity from the unit disc into
itself and $B(z)=z\underset{k=1}{\overset{4}{\prod }}\frac{z-a_{k}}{1-%
\overline{a_{k}}z}$ be a Blaschke product of degree $5$. Then $B\circ M=B$
if and only if $M(z)=c\frac{z-a_{l}}{1-\overline{a_{l}}z}$ with $\left\vert
a_{j}\right\vert =\left\vert a_{l}\right\vert $ for some $(0\leq j,l\leq 4),$
$c=-\frac{a_{j}}{a_{l}}$ and the equation%
\begin{equation}
4c^{2}a_{l}\left\vert a_{l}\right\vert ^{2}+3ca_{l}\left\vert
a_{l}\right\vert ^{2}+3c^{3}a_{l}\left\vert a_{l}\right\vert
^{2}+c^{4}a_{l}+c^{3}a_{l}+c^{2}a_{l}+ca_{l}+a_{l}+c^{2}a_{l}\left\vert
a_{l}\right\vert ^{4}=0,  \label{eqn12}
\end{equation}%
is satisfied by the non-zero zeros of $B$.
\end{corollary}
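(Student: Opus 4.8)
The plan is to treat this corollary as a direct specialization of Theorem \ref{thm6} to the case $n=5$, where the only genuine work is to convert the abstract closing condition (\ref{eqn6}), namely $M^{n-1}(0)-a_i=0$, into the explicit polynomial identity (\ref{eqn12}). All the qualitative conclusions---that $M$ must have the form $M(z)=c\frac{z-a_l}{1-\overline{a_l}z}$, that $|a_j|=|a_l|$ for some zeros $a_j,a_l$, and that $c=-\frac{a_j}{a_l}$---follow verbatim from Theorem \ref{thm6} with $n=5$. I would state these immediately and then concentrate on deriving (\ref{eqn12}).

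For the explicit equation I would set $\alpha=a_l$ and compute the orbit of the origin under $M$ by recursion. Writing $M^{k}(0)=N_k/D_k$ with $N_0=0$, $D_0=1$, the identity $M^{k+1}(0)=M\!\left(M^{k}(0)\right)$ yields the pair of recurrences
\begin{equation*}
N_{k+1}=c\left(N_k-\alpha D_k\right),\qquad D_{k+1}=D_k-\overline{\alpha}\,N_k.
\end{equation*}
Carrying this out for $k=1,2,3,4$ and systematically replacing every occurrence of $\alpha\overline{\alpha}$ by $|a_l|^2$ produces $N_4$ and $D_4$ as polynomials in $c$ and $|a_l|^2$, with an overall factor $\alpha$ appearing in $N_4$.

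The closing condition supplied by Theorem \ref{thm6} is $M^{4}(0)=\alpha$, equivalently $M^{5}(0)=0$ since $M(\alpha)=0$; this is exactly (\ref{eqn6}) for $n=5$. Imposing $N_4=\alpha D_4$ and clearing the nonzero denominator $D_4$ gives $\alpha D_4-N_4=0$, which after collecting terms factors as $a_l$ times a quadratic in $|a_l|^2$ whose coefficients of $1$, $|a_l|^2$, and $|a_l|^4$ are respectively $c^4+c^3+c^2+c+1$, $3c^3+4c^2+3c$, and $c^2$; written out, this is precisely (\ref{eqn12}). The converse direction then follows, as in Theorem \ref{thm6}, by verifying that $B\circ M$ and $B$ agree at the $5$ distinct points $0,a_1,\dots,a_4$ and invoking Theorem \ref{thm7}.

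The only obstacle here is bookkeeping: the fourth iterate $M^{4}(0)$ must be computed without error and its denominator cleared, with each $\alpha\overline{\alpha}$ rewritten as $|a_l|^2$ so that the three coefficients above emerge in the stated form. There is no conceptual difficulty beyond this verification, since Theorem \ref{thm6} already carries all the structural content.
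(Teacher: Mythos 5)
Your proposal is correct and follows exactly the route the paper intends: the corollary is stated as a direct specialization of Theorem \ref{thm6} to $n=5$, and your recursive computation of $M^k(0)$ (which reproduces the iterates the paper itself lists in the accompanying example) correctly turns the closing condition $M^4(0)=a_l$ into $a_l\bigl[(c^4+c^3+c^2+c+1)+(3c^3+4c^2+3c)\left\vert a_l\right\vert^2+c^2\left\vert a_l\right\vert^4\bigr]=0$, which is precisely (\ref{eqn12}). The converse via Theorem \ref{thm7} at the five points $0,a_1,\dots,a_4$ likewise matches the paper's argument for Theorem \ref{thm6}.
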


\begin{example}
Let $B(z)=z\underset{k=1}{\overset{4}{\prod }}\frac{z-a_{k}}{1-\overline{%
a_{k}}z}$ be a Blaschke product of degree $5.$ From Proposition \ref{prop4},
we know that $Z(B)=\{0,M(0),...,M^{n-1}(0)\},$ so we can take%
\begin{eqnarray}
a_{1} &=&M\left( 0\right) ,  \label{eqn13} \\
a_{2} &=&M^{2}\left( 0\right) ,  \notag \\
a_{3} &=&M^{3}\left( 0\right) ,  \notag \\
a_{4} &=&M^{4}\left( 0\right) .  \notag
\end{eqnarray}%
Let $a_{l}=\frac{1}{2}.$ Using the equation $($\ref{eqn12}$),$ we obtain $%
c=-0.856763-i0.515711$ and%
\begin{equation*}
M\left( z\right) =\frac{\left( 0.856763+i0.515711\right) \left( 1-2z\right)
}{2-z}.\text{ }
\end{equation*}%
Using the equation $($\ref{eqn13}$),$ we find%
\begin{eqnarray*}
a_{1} &=&0.428381+i0.257855, \\
a_{2} &=&0.278236-i0.188486, \\
a_{3} &=&0.141178+0.304977i, \\
a_{4} &=&0.5.
\end{eqnarray*}%
Then we find $\left( B\circ M\right) \left( z\right) =B\left( z\right) $ for
the points $z\in \mathbb{D}.$
\end{example}

\begin{corollary}
Let $M\left( z\right) =c\frac{z-\alpha }{1-\overline{\alpha }z}$ be a M\"{o}%
bius transformation different from the identity from the unit disc into
itself and $B(z)=z\underset{k=1}{\overset{6}{\prod }}\frac{z-a_{k}}{1-%
\overline{a_{k}}z}$ be a Blaschke product of degree $7$. Then $B\circ M=B$
if and only if $M(z)=c\frac{z-a_{l}}{1-\overline{a_{l}}z}$ with $\left\vert
a_{j}\right\vert =\left\vert a_{l}\right\vert $ for some $(0\leq j,l\leq 6),$
$c=-\frac{a_{j}}{a_{l}}$ and the equation%
\begin{equation}
\begin{array}{l}
a_{l}+5ca_{l}\left\vert a_{l}\right\vert ^{2}+8c^{2}a_{l}\left\vert
a_{l}\right\vert ^{2}+9c^{3}a_{l}\left\vert a_{l}\right\vert
^{2}+8c^{4}a_{l}\left\vert a_{l}\right\vert ^{2} \\
+5c^{5}a_{l}\left\vert a_{l}\right\vert ^{2}+6c^{2}a_{l}\left\vert
a_{l}\right\vert ^{4}+9c^{3}a_{l}\left\vert a_{l}\right\vert
^{4}+6c^{4}a_{l}\left\vert a_{l}\right\vert ^{4} \\
+c^{3}a_{l}\left\vert a_{l}\right\vert
^{6}+ca_{l}+c^{2}a_{l}+c^{3}a_{l}+c^{4}a_{l}+c^{5}a_{l}+c^{6}a_{l}=0%
\end{array}
\label{eqn14}
\end{equation}%
is satisfied by the non-zero zeros of $B$.
\end{corollary}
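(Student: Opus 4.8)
The plan is to read this statement as the specialization of Theorem \ref{thm6} to the prime degree $n=7$, so that the only genuinely new content is the explicit evaluation of the iteration equation (\ref{eqn6}) in this case. First I would invoke Theorem \ref{thm6} directly: it already guarantees that $B\circ M=B$ forces $M(z)=c\frac{z-a_l}{1-\overline{a_l}z}$ with $|a_j|=|a_l|$ for some non-zero zeros $a_j,a_l$ and $c=-a_j/a_l$, together with the requirement (\ref{eqn6}) on the iterates of $M$. Conversely, these same conditions return $B\circ M=B$ through the $n$ interpolation points $0,a_1,\dots,a_{n-1}$ and Theorem \ref{thm7}. Thus the corollary is proved once I show that, for $n=7$, the abstract condition $M^{n-1}(0)-a_l=0$ is equivalent to the polynomial identity (\ref{eqn14}).

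To carry this out I would make the iteration $M^{k}(0)$ completely explicit. Writing $\alpha=a_l$ (which equals $a_{n-1}$ by the argument in Theorem \ref{thm6}) and $w_k=M^{k}(0)$, the defining relation $w_{k+1}=c\frac{w_k-\alpha}{1-\overline{\alpha}w_k}$ becomes a linear recursion on numerator and denominator: setting $w_k=p_k/q_k$ with $(p_0,q_0)=(0,1)$, one has
\begin{equation*}
p_{k+1}=c\,(p_k-\alpha q_k),\qquad q_{k+1}=q_k-\overline{\alpha}\,p_k .
\end{equation*}
Since $M$ must have order $7$ (as $7$ is prime, the divisibility $k\mid 7$ together with $k\ge 2$ forces $k=7$), the closing condition $M^{7}(0)=0$ is equivalent to $w_{6}=\alpha$, that is $p_{6}-\alpha q_{6}=0$. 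I would then iterate the recursion six times, each time collapsing every product $\alpha\overline{\alpha}$ into $|a_l|^{2}$, so that $p_6$ and $q_6$ become polynomials in $c$ and $|a_l|^{2}$ (with $p_6$ carrying an overall factor $\alpha=a_l$). Substituting into $p_6-\alpha q_6=0$ and factoring $a_l$ through every term yields precisely (\ref{eqn14}).

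The main obstacle is purely the algebraic bookkeeping of the six-fold iteration: the expressions for $p_6$ and $q_6$ are long, and one must track the coefficient of each monomial $c^{i}|a_l|^{2j}$ carefully. This is manageable because the recursion keeps the two polynomials of a controlled type, the denominator $q_k$ always having constant term $1$ and depending on $\alpha$ only through $|a_l|^{2}$, while the numerator $p_k$ is $a_l$ times a polynomial of the same shape. As a consistency check I would confirm that the identical procedure reproduces the degree $3$ relation $c^{2}+c(1+|a|^{2})+1=0$ of Corollary \ref{corol4} and the degree $5$ relation (\ref{eqn12}); matching those two lower cases makes the degree $7$ expansion (\ref{eqn14}) routine to verify term by term.
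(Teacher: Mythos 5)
Your proposal is correct and follows essentially the same route as the paper: the corollary is presented there precisely as a specialization of Theorem \ref{thm6} to the prime degree $n=7$, with equation (\ref{eqn14}) obtained by iterating $M$ on $0$ and imposing the closing condition $M^{6}(0)=a_l$ (equivalently $M^{7}(0)=0$), exactly as in your numerator/denominator recursion. Your computed coefficients $1,1,1,1,1,1,1$ for $c^{0},\dots,c^{6}$, $5,8,9,8,5$ for $c\left\vert a_l\right\vert^{2},\dots,c^{5}\left\vert a_l\right\vert^{2}$, $6,9,6$ for $c^{2}\left\vert a_l\right\vert^{4},\dots,c^{4}\left\vert a_l\right\vert^{4}$, and $1$ for $c^{3}\left\vert a_l\right\vert^{6}$ reproduce (\ref{eqn14}) exactly, and the same scheme recovers (\ref{eqn12}) for degree $5$ as you note.
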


\begin{example}
Let $B(z)=z\underset{k=1}{\overset{6}{\prod }}\frac{z-a_{k}}{1-\overline{%
a_{k}}z}$ be a Blaschke product of degree $7$. From Proposition \ref{prop4},
we know that $Z(B)=\{0,M(0),...,M^{n-1}(0)\},$ so we can take%
\begin{eqnarray}
a_{1} &=&M\left( 0\right) ,  \label{eqn15} \\
a_{2} &=&M^{2}\left( 0\right) ,  \notag \\
a_{3} &=&M^{3}\left( 0\right) ,  \notag \\
a_{4} &=&M^{4}\left( 0\right) ,  \notag \\
a_{5} &=&M^{5}\left( 0\right) ,  \notag \\
a_{6} &=&M^{6}\left( 0\right) .  \notag
\end{eqnarray}%
Let $a_{l}=\frac{1}{2}.$ Using the equation $($\ref{eqn14}$),$ we obtain $%
c=0.217617-i0.976034$ and%
\begin{equation*}
M\left( z\right) =\frac{-\left( 0.217617-i0.976034\right) \left( 2z-1\right)
}{2-z}.
\end{equation*}%
Using the equation $($\ref{eqn15}$),$ we find%
\begin{eqnarray*}
a_{1} &=&-0.108809+i0.488017, \\
a_{2} &=&163605+i0.702141, \\
a_{3} &=&0.40682+i0.679542, \\
a_{4} &=&0.574725+i0.54495, \\
a_{5} &=&0.64971+i0.312482, \\
a_{6} &=&0.5.
\end{eqnarray*}%
Then we obtain $\left( B\circ M\right) \left( z\right) =B\left( z\right) $
for the points $z\in \mathbb{D}.$
\end{example}

Now we consider the canonical Blaschke products of degree $4$. At first,
from \cite{gassier}, we can give the following corollary for a Blaschke
product $B$ of degree $4$ .

\begin{corollary}
\label{corol1}$($See \cite{gassier}, page $204.)$ Let $G$ be a cyclic group
which is composed of the transformations $M$ such that $B\circ M=B$. Then we
have the following assertions$:$

$(i)$ If $B(z)=z^{4},$ $G$ is generated by $M(z)=iz,$ $z\in \overline{%
\mathbb{D}}.$

$(ii)$ If $Z(B)$ contains a non-zero point in $\mathbb{D}$, there are two
cases$:$

$(a)$%
\begin{equation*}
B(z)=z\frac{z-a_{1}}{1-\overline{a_{1}}z}\frac{z-a_{2}}{1-\overline{a_{2}}z}%
\frac{z-\frac{a_{1}-a_{2}}{1-\overline{a_{1}}a_{2}}}{1-z\left( \frac{%
\overline{a_{1}}-\overline{a_{2}}}{1-a_{1}\overline{a_{2}}}\right) },
\end{equation*}%
where $a_{1}$ or $a_{2}$ is non-equal to $0.$ In this case, $M(z)=-\frac{%
z-a_{1}}{1-\overline{a_{1}}z}$ and thus $M$ does not generate $G$ since the
degree of $M$ is equal to $2.$

$(b)$
\begin{equation*}
B(z)=z\frac{z-a_{1}}{1-\overline{a_{1}}z}\frac{z-M(a_{1})}{1-\overline{%
M(a_{1})}z}\frac{z-M^{2}(a_{1})}{1-\overline{M^{2}(a_{1})}z},
\end{equation*}%
with $a_{1}\in \mathbb{D}.$ In this case $M(z)=c\frac{z+\overline{c}a_{1}}{%
1+c\overline{a_{1}}z}$ generates $G$ with $\left\vert c\right\vert =1$ and $%
c+\overline{c}=-2\left\vert a_{1}\right\vert ^{2}.$
\end{corollary}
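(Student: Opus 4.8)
The plan is to understand $G$ through one of its generators and to organize everything by the order of that generator. Since $G$ is a nontrivial cyclic group, I fix a generator $M\neq I$; by Theorem~\ref{thm1} it is a M\"{o}bius self-map of $\mathbb{D}$ whose order $k$ divides $n=4$, so $k\in\{2,4\}$, and by Theorem~\ref{thm4} it permutes the zero set $Z(B)$. The argument then branches according to whether all four zeros sit at the origin. In the degenerate case $B(z)=z^{4}$, invariance forces $M(0)=0$, hence $M(z)=cz$ with $\left\vert c\right\vert =1$, and $(cz)^{4}=z^{4}$ gives $c^{4}=1$; thus $G$ consists of the rotations by fourth roots of unity and is generated by $M(z)=iz$, which is assertion $(i)$.

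Assume now that $Z(B)$ contains a nonzero point, and relabel so that $a_{1}=M(0)\neq 0$. Writing $M(z)=c\frac{z-\alpha}{1-\overline{\alpha}z}$ and imposing $M(0)=a_{1}$ forces $\alpha=-\overline{c}a_{1}$, giving the normal form $M(z)=c\frac{z+\overline{c}a_{1}}{1+c\overline{a_{1}}z}$ with $\left\vert c\right\vert =1$. All the remaining information is carried by $c$ together with the order of $M$, and the cleanest way to read off that order is through the conjugation invariant $\tau=(\mathrm{tr})^{2}/\det$. Representing $M$ by $\left(\begin{smallmatrix} c & a_{1}\\ c\overline{a_{1}} & 1\end{smallmatrix}\right)$ gives $\tau=(c+1)^{2}/[\,c(1-\left\vert a_{1}\right\vert ^{2})\,]$, and since an elliptic element of order $2$ has multiplier $-1$ (so $\tau=0$) while one of order $4$ has multiplier $\pm i$ (so $\tau=2$), the two subcases of $(ii)$ are distinguished by the value of $\tau$.

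In the subcase $k=2$ the equation $\tau=0$ forces $c=-1$, so $M(z)=-\frac{z-a_{1}}{1-\overline{a_{1}}z}$, the involution interchanging $0$ and $a_{1}$; a second $M$-orbit is obtained from a free zero $a_{2}$, whose partner is $M(a_{2})=\frac{a_{1}-a_{2}}{1-\overline{a_{1}}a_{2}}$, and substituting reproduces the product displayed in $(ii)(a)$, with $M$ of order $2$. In the subcase $k=4$ the equation $\tau=2$ reads $(c+1)^{2}=2c(1-\left\vert a_{1}\right\vert ^{2})$; dividing by $c$ and using $\overline{c}=c^{-1}$ yields $c+\overline{c}=-2\left\vert a_{1}\right\vert ^{2}$, the relation in $(ii)(b)$, while the four zeros now form the single $M$-orbit $\{0,a_{1},M(a_{1}),M^{2}(a_{1})\}$, so $M$ itself generates $G\cong\mathbb{Z}/4\mathbb{Z}$. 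For the converse in each case one checks that $M$ sends every zero of $B$ to a zero of $B$, so $B\circ M$ and $B$ agree at the four points of $Z(B)$ and hence $B\circ M\equiv B$ by Theorem~\ref{thm7}, exactly as in the proof of Theorem~\ref{thm6}.

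The step I expect to be most delicate is the order computation in $(ii)(b)$: one must confirm that $\tau=2$ is precisely the condition isolating order $4$ (as against order $2$, which corresponds to $\tau=0$ and $c=-1$), and that $c+\overline{c}=-2\left\vert a_{1}\right\vert ^{2}$ genuinely has unimodular solutions --- it does, since it amounts to $\cos(\arg c)=-\left\vert a_{1}\right\vert ^{2}\in(-1,0]$ --- with the two conjugate roots $c,\overline{c}$ corresponding to the two generators $M,M^{-1}$ of the order-$4$ group. The bookkeeping that the zero set is exactly $M$-invariant, rather than merely that one extra point is fixed, is supplied uniformly by Theorem~\ref{thm4}.
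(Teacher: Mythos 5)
The paper gives no proof of this corollary at all: it is imported verbatim from Gassier--Chalendar (\cite{gassier}, page 204) and used as a known input, so there is no internal argument to measure yours against. Judged on its own, your route is a reasonable and largely correct reconstruction: classifying a generator $M\neq I$ of $G$ by its order $k\in\{2,4\}$ via Theorem \ref{thm1}, normalizing $M(z)=c\frac{z+\overline{c}a_{1}}{1+c\overline{a_{1}}z}$ from $M(0)=a_{1}$, and extracting the order from the conjugation invariant $\tau=(c+1)^{2}/\bigl[c(1-\left\vert a_{1}\right\vert^{2})\bigr]$ does correctly yield $c=-1$ (hence the involution of case $(a)$) when $k=2$ and $c+\overline{c}=-2\left\vert a_{1}\right\vert^{2}$ when $k=4$; the orbit bookkeeping then gives the two displayed factorizations of $B$.

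There is, however, one step that fails as written: in case $(ii)$ you ``relabel so that $a_{1}=M(0)\neq 0$,'' but nothing guarantees $M(0)\neq 0$ merely because $Z(B)$ contains a nonzero point. For instance $B(z)=z^{2}\frac{z^{2}-a^{2}}{1-\overline{a}^{2}z^{2}}$ with $0<\left\vert a\right\vert<1$ satisfies $B\circ M=B$ for $M(z)=-z$, which fixes the origin; this is precisely case $(ii)(a)$ with $a_{1}=0$ (the statement only requires that $a_{1}$ \emph{or} $a_{2}$ be nonzero), and your branching never reaches it. The repair is short --- if $M(0)=0$ then $M(z)=cz$, an orbit count on the nonzero zeros (each orbit has size equal to the order of $c$, and there are at most three such zeros) rules out order $4$, forcing $c=-1$ and landing you in $(a)$ with $a_{1}=0$ --- but it must be supplied. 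Two smaller cautions: Theorem \ref{thm7} needs four \emph{distinct} interpolation points, which can fail in case $(a)$ when $a_{2}$ is the fixed point of $M$ in $\mathbb{D}$ (then $a_{3}=a_{2}$ is a double zero), so the converse there needs Theorem \ref{thm4} instead; and the clause in $(a)$ that the involution ``does not generate $G$'' concerns the situation where the full invariant group has order $4$ and the involution is only the square of a generator --- your argument, which fixes $M$ to be a generator from the outset, does not address that part of the statement.
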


In this case there is a nice relation between decomposition of the finite
Blaschke products $B$ of order $4$ and the Poncelet curves associated with
them. From \cite{fujimura} and \cite{NYO3}, we know the following theorem.

\begin{theorem}
For any Blaschke product $B$ of order $4,$ $B$ is a composition of two
Blaschke products of degree $2$ if and only if the Poncelet curve $E$ of
this Blaschke product is an ellipse with the equation
\begin{equation*}
E:\left\vert z-a_{1}\right\vert +\left\vert z-a_{2}\right\vert =\left\vert 1-%
\overline{a_{1}}a_{2}\right\vert \sqrt{\frac{\left\vert a_{1}\right\vert
^{2}+\left\vert a_{2}\right\vert ^{2}-2}{\left\vert a_{1}\right\vert
^{2}\left\vert a_{2}\right\vert ^{2}-1}}.
\end{equation*}
\end{theorem}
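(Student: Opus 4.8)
The plan is to reduce the statement to the explicit normal form of decomposable degree-4 products already recorded in this paper, and then to identify the envelope of the circumscribing quadrilaterals with the displayed conic. First I would pin down the algebra. For a degree-4 product, $B=B_{2}\circ B_{1}$ with $\deg B_{1}=\deg B_{2}=2$ is equivalent to the existence of a M\"{o}bius map $M\neq I$ with $M^{2}=I$ and $B\circ M=B$: the implication ``$\Leftarrow$'' is Theorem \ref{thm1}(v) with $k=2$, while for ``$\Rightarrow$'' one uses that a degree-2 Blaschke product $B_{1}$ is a two-sheeted proper self-map of $\mathbb{D}$ whose fibres are interchanged by a unique nontrivial M\"{o}bius involution $M$ (its deck transformation), so that $B\circ M=B_{2}\circ B_{1}\circ M=B_{2}\circ B_{1}=B$. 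This also explains why, although $B\circ M=B$ is not necessary for a decomposition in general, it is automatic when the inner factor has degree $2$. Feeding such an $M$ into Corollary \ref{corol1} shows that, except for the item $B=z^{4}$ (whose quadrilaterals are rotating squares circumscribing the circle $\left\vert z\right\vert =1/\sqrt{2}$, a degenerate ellipse), the product $B$ is exactly the one in case (ii)(a), with zero set $\{0,a_{1},a_{2},M(a_{2})\}$ split by $M$ into the pairs $\{0,a_{1}\}$ and $\{a_{2},M(a_{2})\}$.

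With the normal form in hand I would bring in the Poncelet/numerical-range description from \cite{fujimura} and \cite{NYO3}: for each $\lambda\in\partial\mathbb{D}$ the four solutions of $B(z)=\lambda$ lie on $\partial\mathbb{D}$ and form a quadrilateral, and all these quadrilaterals circumscribe the boundary of the numerical range $W(A)$ of the $3\times3$ compression $A$ of the shift to $K_{B}=H^{2}\ominus BH^{2}$. The decisive structural point is that, for a $3\times3$ matrix, $\partial W(A)$ is an ellipse precisely when $A$ is unitarily equivalent to a direct sum $A'\oplus(\mu)$ with $A'$ a $2\times2$ block and $\mu$ interior to the ellipse $W(A')$, and this reducibility of the compressed shift is the operator-theoretic shadow of the factorization $B=B_{2}\circ B_{1}$. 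Thus ``$\partial W(A)$ an ellipse'' and ``$B$ is a $2+2$ composition'' are matched through the spectral structure of $A$, with the $2\times2$ block dictating the two foci of the ellipse.

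It remains to identify the ellipse explicitly, which I expect to be the main obstacle. Working directly with the symmetric normal form, one pairs the four boundary points via $M$ and tracks their common tangents; the half-turn symmetry forces the envelope to degenerate from a general quartic to a single conic symmetric under $M$, hence an ellipse. To recover the stated equation I would compute its foci and its sum-of-distances constant: by analogy with the degree-3 Blaschke ellipse (whose foci are the two nonzero zeros), the foci should come out to be $a_{1}$ and $a_{2}$, and the constant $\left\vert 1-\overline{a_{1}}a_{2}\right\vert\sqrt{(\left\vert a_{1}\right\vert^{2}+\left\vert a_{2}\right\vert^{2}-2)/(\left\vert a_{1}\right\vert^{2}\left\vert a_{2}\right\vert^{2}-1)}$ should emerge after simplifying the focal data. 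The genuinely hard part is carrying the M\"{o}bius arithmetic of the four zeros all the way through to the metric invariants of the conic and verifying that the computed envelope coincides on the nose with the displayed ellipse; the involution $M$ is exactly what keeps this computation finite and is the reason the Poncelet curve collapses to an ellipse in the first place.
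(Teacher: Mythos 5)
First, a point of reference: the paper does not prove this statement at all --- it is imported verbatim from \cite{fujimura} and \cite{NYO3} (``From \cite{fujimura} and \cite{NYO3}, we know the following theorem''), so there is no in-paper argument to measure yours against; the proposal has to stand on its own. Your opening reduction does stand: for a degree-$4$ product, $B=B_{2}\circ B_{1}$ with both factors quadratic is equivalent to $B\circ M=B$ for a M\"{o}bius involution $M\neq I$ of $\mathbb{D}$, since the two sheets of a quadratic Blaschke product are exchanged by its deck transformation (write $B_{1}=\tau\circ q\circ\sigma$ with $q(w)=w^{2}$ and $\sigma,\tau$ M\"{o}bius), while the converse is Theorem \ref{thm1}(v) with $k=2$; your remark that this is why the non-necessity of $B\circ M=B$ for decomposability does not bite in degree $4$ is correct and worth recording.

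The proof breaks at the operator-theoretic pivot. The criterion you state --- that a $3\times 3$ matrix $A$ has elliptical $\partial W(A)$ precisely when $A$ is unitarily equivalent to $A'\oplus(\mu)$ with $\mu$ interior to $W(A')$ --- is false in general, and fatally so here: the relevant matrix is the compressed shift $S_{b}$ on $H^{2}\ominus bH^{2}$ with $b=B/z$ of degree $3$, and compressed shifts of finite Blaschke products are well known to be irreducible (in the Takenaka basis $S_{b}$ is upper triangular with nonzero superdiagonal entries $\sqrt{(1-|a_{j}|^{2})(1-|a_{j+1}|^{2})}$ and admits no reducing subspace). Your test would therefore conclude that no degree-$4$ Blaschke product has an elliptical Poncelet curve, contradicting the very theorem being proved. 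The correct $3\times3$ ellipse test (Keeler--Rodman--Spitkovsky, which is what Fujimura and Gau--Wu actually use) does not require reducibility; it asks that one eigenvalue satisfy an algebraic relation involving $d=\mathrm{tr}(A^{*}A)-\sum_{i}|\lambda_{i}|^{2}$, and it is that relation, rewritten in terms of the zeros of $B$, which simultaneously yields the composition condition and identifies the foci $a_{1},a_{2}$ and the major axis $\left\vert 1-\overline{a_{1}}a_{2}\right\vert\sqrt{(\left\vert a_{1}\right\vert^{2}+\left\vert a_{2}\right\vert^{2}-2)/(\left\vert a_{1}\right\vert^{2}\left\vert a_{2}\right\vert^{2}-1)}$. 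Compounding this, the derivation of the explicit equation --- which you yourself flag as ``the genuinely hard part'' --- is deferred rather than carried out. As written, the proposal is a programme with one incorrect key lemma and the decisive computation missing, not a proof.
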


It is also known that the decomposition of some Blaschke products $B$ of
degree $4$ is linked with the case that Poncelet curve of this Blaschke
product is an ellipse with a nice geometric property.

\begin{theorem}
\label{thm2}$($See \cite{NYO3}, Theorem $5.2$ on page $103.)$ Let $%
a_{1},a_{2}$ and $a_{3}$ be three distinct nonzero complex numbers with $%
\left\vert a_{i}\right\vert <1$ for $1\leq i\leq 3$ and $B(z)=z\underset{i=1}%
{\overset{3}{\prod }}\frac{z-a_{i}}{1-\overline{a_{i}}z}$ be a Blaschke
product of degree $4$ with the condition that one of its zeros, say $a_{1},$%
satisfies the following equation:%
\begin{equation*}
a_{1}+\overline{a_{1}}a_{2}a_{3}=a_{2}+a_{3}.
\end{equation*}%
Then the Poncelet curve associated with $B$ is the ellipse $E$ with the
equation%
\begin{equation*}
E:\left\vert z-a_{2}\right\vert +\left\vert z-a_{3}\right\vert =\left\vert 1-%
\overline{a_{2}}a_{3}\right\vert \sqrt{\frac{\left\vert a_{2}\right\vert
^{2}+\left\vert a_{3}\right\vert ^{2}-2}{\left\vert a_{2}\right\vert
^{2}\left\vert a_{3}\right\vert ^{2}-1}}.
\end{equation*}
\end{theorem}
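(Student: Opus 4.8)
The plan is to reduce the statement to the decomposition theorem for order-$4$ Blaschke products stated just before it, by manufacturing an order-$2$ M\"obius symmetry of $B$ from the hypothesis on $a_{1}$.

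First I would reinterpret the condition algebraically. Rewriting $a_{1}+\overline{a_{1}}a_{2}a_{3}=a_{2}+a_{3}$ as $a_{1}-a_{2}=a_{3}(1-\overline{a_{1}}a_{2})$, it is equivalent to
\[
a_{3}=\frac{a_{1}-a_{2}}{1-\overline{a_{1}}a_{2}}=M(a_{2}),\qquad
M(z)=-\frac{z-a_{1}}{1-\overline{a_{1}}z}.
\]
One checks that $M(0)=a_{1}$, $M(a_{1})=0$ and $M^{[2]}=I$, so $M$ is the M\"obius involution of $\mathbb{D}$ interchanging $0$ and $a_{1}$, different from $I$. The displayed identity then reads $M(a_{2})=a_{3}$, whence also $M(a_{3})=a_{2}$. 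Consequently $M$ leaves the zero set $Z(B)=\{0,a_{1},a_{2},a_{3}\}$ invariant, with the two orbits $\{0,a_{1}\}$ and $\{a_{2},a_{3}\}$; this is precisely the configuration appearing in Corollary \ref{corol1}(ii)(a).

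Next I would promote this permutation of the zeros to the equality $B\circ M=B$. Since $M$ is a disc automorphism permuting $Z(B)$, the product $B\circ M$ is again a finite Blaschke product of degree $4$ with the same zeros as $B$, so $B\circ M=c\,B$ for some constant $c$ with $|c|=1$; alternatively one may invoke Theorem \ref{thm4}. To force $c=1$, evaluate at the fixed point $\gamma\in\mathbb{D}$ of the involution $M$: being fixed, $\gamma$ lies in neither orbit, hence $B(\gamma)\neq0$, and $c\,B(\gamma)=(B\circ M)(\gamma)=B(\gamma)$ gives $c=1$. Thus $B\circ M=B$ with $M\neq I$ of order $2$, and Theorem \ref{thm1}$(v)$ (with $n=4$, $k=2$, $2\mid4$) yields a factorization $B=B_{2}\circ B_{1}$ with $\deg B_{1}=\deg B_{2}=2$.

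Having displayed $B$ as a composition of two degree-$2$ Blaschke products, I would invoke the characterization theorem preceding this statement (from \cite{fujimura} and \cite{NYO3}): for an order-$4$ Blaschke product, decomposability into two degree-$2$ factors is equivalent to its Poncelet curve being an ellipse of exactly the stated foci-and-constant shape. It remains to check that the two foci are the non-origin orbit $\{a_{2},a_{3}\}$, upon which substitution gives the ellipse $E$; nondegeneracy of $E$ is guaranteed by the hypotheses that $a_{2},a_{3}$ are distinct, nonzero, and of modulus $<1$. I expect this last identification to be the main obstacle: the characterization theorem is written for a generic labeling of the zeros, so one must track through its proof which pair of zeros plays the role of the foci, and confirm that the orbit structure of $M$ forces this pair to be $\{a_{2},a_{3}\}$ rather than $\{0,a_{1}\}$ (the analogous degree-$3$ phenomenon, where the Poncelet ellipse has its foci at the two nonzero zeros, makes $\{a_{2},a_{3}\}$ the expected answer).
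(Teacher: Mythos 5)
This theorem is quoted in the paper from \cite{NYO3} (Theorem 5.2) with no proof supplied, so there is no in-paper argument to compare against; I can only assess your proposal on its own terms. Your opening moves are correct and fit the paper's machinery well: the hypothesis $a_{1}+\overline{a_{1}}a_{2}a_{3}=a_{2}+a_{3}$ is indeed equivalent to $a_{3}=M(a_{2})$ for the involution $M(z)=-\frac{z-a_{1}}{1-\overline{a_{1}}z}$, which swaps $0\leftrightarrow a_{1}$ and $a_{2}\leftrightarrow a_{3}$; the upgrade from ``$M$ permutes $Z(B)$'' to $B\circ M=B$ via the unimodular constant and the interior fixed point $\gamma$ (which lies in no two-element orbit, so $B(\gamma)\neq 0$) is sound and is essentially Theorem \ref{thm4}; and Theorem \ref{thm1}$(v)$ then gives $B=B_{2}\circ B_{1}$ with both factors of degree $2$. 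This is exactly the configuration of Corollary \ref{corol1}$(ii)(a)$.

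The gap is the step you yourself flag and then leave unresolved: identifying the Poncelet ellipse as the one with foci $a_{2},a_{3}$ and major-axis length $\left\vert 1-\overline{a_{2}}a_{3}\right\vert \sqrt{\frac{\left\vert a_{2}\right\vert ^{2}+\left\vert a_{3}\right\vert ^{2}-2}{\left\vert a_{2}\right\vert ^{2}\left\vert a_{3}\right\vert ^{2}-1}}$. That identification is not a loose end --- it is the entire content of the theorem, since ``the Poncelet curve is some ellipse'' already follows from decomposability, and the wrong choice of orbit would give a different curve (foci $0,a_{1}$ and constant $\sqrt{2-\left\vert a_{1}\right\vert ^{2}}$). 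The characterization theorem you invoke is stated in the paper with generic labels and cannot by itself tell you which pair of zeros are the foci, so an appeal to it plus an analogy with degree $3$ does not close the argument. To finish, you need the explicit decomposition rather than the abstract one from Theorem \ref{thm1}: take $B_{1}(z)=z\frac{z-a_{1}}{1-\overline{a_{1}}z}$ and $B_{2}(w)=w\frac{w+a_{2}a_{3}}{1+\overline{a_{2}a_{3}}w}$ (as recorded after Theorem \ref{thm3}), verify using $a_{3}=\frac{a_{1}-a_{2}}{1-\overline{a_{1}}a_{2}}$ that $B_{1}(a_{2})=B_{1}(a_{3})=-a_{2}a_{3}$, and then apply Fujimura's result, in which the foci of the inscribed ellipse are precisely the two $B_{1}$-preimages of the nonzero zero of $B_{2}$ --- here $a_{2}$ and $a_{3}$ --- with the stated sum of distances. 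Without that computation the proof establishes decomposability but not the displayed equation of $E$.
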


Let $B(z)$ be given as in the statement of Theorem \ref{thm2}. For any $%
\lambda \in \partial \mathbb{D}$, let $z_{1},z_{2},z_{3}$ and $z_{4}$ be the
four distinct points satisfying $B(z_{1})=B(z_{2})=B(z_{3})=B(z_{4})=\lambda
$. Then the Poncelet curve associated with $B$ is an ellipse $E$ with foci $%
a_{2}$ and $a_{3}$ and the lines joining $z_{1},z_{3}$ and $z_{2},z_{4}$
passes through the point $a_{1}.$

\begin{example}
Let $a_{1}=\frac{2}{3},a_{2}=\frac{1}{2}-i\frac{1}{2},a_{3}=\frac{1}{2}+i%
\frac{1}{2}$ and $B(z)=z\underset{i=1}{\overset{3}{\prod }}\frac{z-a_{i}}{1-%
\overline{a_{i}}z}.$ The Poncelet curve associated with $B$ is an ellipse
with foci $a_{2}$ and $a_{3}$ $($see Figure \ref{fig:4}$).$
\end{example}

\begin{figure}[t]
\centering
\includegraphics[height=8cm, width=8cm]{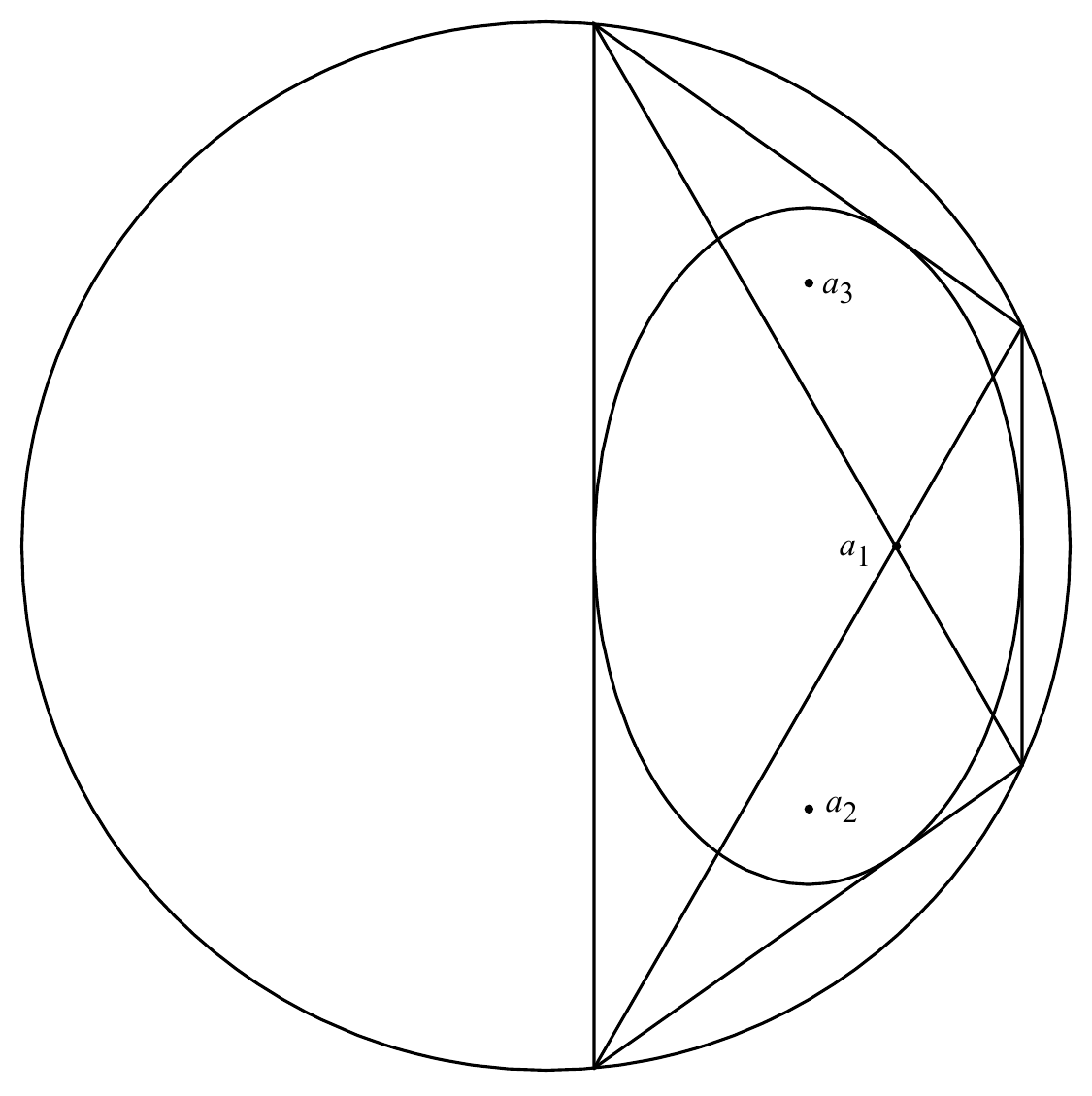}
\caption{{\protect\small {}}}
\label{fig:4}
\end{figure}

However decomposition of a Blaschke product $B$ is not always linked with
the Poncelet curve of the Blaschke product, as we have seen in the following
theorem.

\begin{theorem}
\label{thm3} $($See \cite{NYO1}, Theorem $4.2$ in page $69.)$ Let $%
a_{1},a_{2},...,a_{2n-1}$ be $2n-1$ distinct nonzero complex numbers with $%
\left\vert a_{k}\right\vert <1$ for $1\leq k\leq 2n-1$ and $B(z)=z\underset{%
k=1}{\overset{2n-1}{\prod }}\frac{z-a_{k}}{1-\overline{a_{k}}z}$ be a
Blaschke product of degree $2n$ with the condition that one of its zeros,
say $a_{1},$ satisfies the following equations$:$%
\begin{eqnarray*}
a_{1}+\overline{a_{1}}a_{2}a_{3} &=&a_{2}+a_{3}. \\
a_{1}+\overline{a_{1}}a_{4}a_{5} &=&a_{4}+a_{5}. \\
&&... \\
a_{1}+\overline{a_{1}}a_{2n-2}a_{2n-1} &=&a_{2n-2}+a_{2n-1}.
\end{eqnarray*}%
$(i)$ If $L$ is any line through the point $a_{1},$ then for the points $%
z_{1}$ and $z_{2}$ at which L intersects $\partial \mathbb{D}$, we have $%
B(z_{1})=B(z_{2}).$

$(ii)$ The unit circle $\partial \mathbb{D}$ and any circle through the
points $0$ and $\frac{1}{\overline{a}_{1}}$ have exactly two distinct
intersection points $z_{1}$ and $z_{2}.$ Then we have $B(z_{1})=B(z_{2})$
for these intersection points.
\end{theorem}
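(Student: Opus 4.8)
The plan is to recognize that the displayed system of hypotheses says precisely that the zeros of $B$ are interchanged in pairs by a single M\"{o}bius involution, and then to reduce both $(i)$ and $(ii)$ to the identity $B\circ M=B$ for that involution. Solving $a_{1}+\overline{a_{1}}a_{2j}a_{2j+1}=a_{2j}+a_{2j+1}$ for $a_{2j+1}$ gives $a_{2j+1}=(a_{1}-a_{2j})/(1-\overline{a_{1}}a_{2j})=M(a_{2j})$, where $M(w)=(a_{1}-w)/(1-\overline{a_{1}}w)$ is the disc automorphism interchanging $0$ and $a_{1}$. Since $M$ is an involution with $M(0)=a_{1}$, the zero set $Z(B)=\{0,a_{1},a_{2},\dots ,a_{2n-1}\}$ splits into the $M$-orbits $\{0,a_{1}\},\{a_{2},a_{3}\},\dots ,\{a_{2n-2},a_{2n-1}\}$, so that $M(Z(B))=Z(B)$.

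Next I would establish $B\circ M=B$. As $M$ is a degree-one automorphism carrying $Z(B)$ onto itself, $B\circ M$ is a finite Blaschke product of degree $2n$ whose (simple) zeros are exactly $M^{-1}(Z(B))=Z(B)$; hence $B\circ M=\lambda B$ with $|\lambda|=1$. Since $M$ is a non-identity involution it is elliptic, with a single fixed point $\gamma\in\mathbb{D}$; evaluating there yields $\lambda B(\gamma)=B(\gamma)$, and $\gamma\notin Z(B)$ because an $M$-fixed point cannot coincide with any point that $M$ moves, so $B(\gamma)\neq 0$ and $\lambda=1$. Equivalently, this is exactly the content of Theorem \ref{thm4} once one has the permutation of the zeros together with the single extra coincidence at $\gamma$.

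For $(i)$ I would invoke the chord equation of the unit circle: distinct $z_{1},z_{2}\in\partial\mathbb{D}$ lie on the chord $\{z:\ z+z_{1}z_{2}\overline{z}=z_{1}+z_{2}\}$, and this chord passes through $a_{1}$ exactly when $a_{1}+\overline{a_{1}}z_{1}z_{2}=z_{1}+z_{2}$, that is, when $z_{2}=M(z_{1})$. Therefore, if $L$ is a line through $a_{1}$ meeting $\partial\mathbb{D}$ in $z_{1}$ and $z_{2}$, then $z_{2}=M(z_{1})$ and $B(z_{2})=(B\circ M)(z_{1})=B(z_{1})$.

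Part $(ii)$ then follows from $(i)$ by inversion. The map $J(z)=1/\overline{z}$ fixes $\partial\mathbb{D}$ pointwise and carries the pencil of lines through $a_{1}$ (the generalized circles through $a_{1}$ and $\infty$) bijectively onto the pencil of circles through $J(a_{1})=1/\overline{a_{1}}$ and $J(\infty)=0$, preserving the two intersection points with $\partial\mathbb{D}$. Hence every circle through $0$ and $1/\overline{a_{1}}$ cuts $\partial\mathbb{D}$ in the same pair $\{z_{1},z_{2}\}$ as some line through $a_{1}$, and $B(z_{1})=B(z_{2})$ by $(i)$. The only genuine obstacle I anticipate is the conceptual identification underlying the first and third paragraphs: noticing that the algebraic pairing condition on the zeros is literally the condition that the chord through $a_{1}$ joins the two points, so that the geometric ``second intersection'' involution coincides with the M\"{o}bius involution $M$; after that, verifying $B\circ M=B$ and transporting $(i)$ to $(ii)$ is routine.
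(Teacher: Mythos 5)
Your argument is correct, but note that the paper itself gives no proof of this statement: it is quoted verbatim from \cite{NYO1} (Theorem 4.2 there), and the only hint of the original argument is the remark immediately following the theorem, namely that the proof in \cite{NYO1} runs through the explicit decomposition $B=B_{2}\circ B_{1}$ with $B_{1}(z)=z\frac{z-a_{1}}{1-\overline{a}_{1}z}$ and $B_{2}(z)=z\prod_{j}\frac{z+a_{2j}a_{2j+1}}{1+\overline{a_{2j}a_{2j+1}}z}$. That route uses the pairing hypotheses to factor $B_{1}(z)+a_{2j}a_{2j+1}=\frac{(z-a_{2j})(z-a_{2j+1})}{1-\overline{a}_{1}z}$, verifies $B=B_{2}\circ B_{1}$ directly, and then reduces (i) and (ii) to the elementary geometry of the degree-two factor $B_{1}$ (two boundary points have the same $B_{1}$-value exactly when their chord passes through $a_{1}$). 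You instead read the hypotheses as saying that the involution $M(w)=\frac{a_{1}-w}{1-\overline{a}_{1}w}$ permutes $Z(B)$, deduce $B\circ M=\lambda B$ and kill $\lambda$ at the elliptic fixed point (equivalently, invoke Theorem \ref{thm4}), identify the chord condition $a_{1}+\overline{a}_{1}z_{1}z_{2}=z_{1}+z_{2}$ with $z_{2}=M(z_{1})$, and transport (i) to (ii) by the inversion $z\mapsto 1/\overline{z}$. All of these steps check out (including the factorization of the chord equation and the fact that a circle through $0$ and $1/\overline{a}_{1}$ meets $\partial\mathbb{D}$ in exactly two points, since one of these points lies inside and the other outside the unit circle). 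Your version is arguably better suited to this paper, since it exhibits the theorem as an instance of the invariance condition $B\circ M=B$ that is the paper's central theme, whereas the cited proof buys the explicit decomposition $B=B_{2}\circ B_{1}$ as a byproduct, which the paper then uses; if you wanted that decomposition too, you would still need the computational factorization step.
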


From the proof of Theorem \ref{thm3}, we know that Blaschke product $B$ can
be written as $B(z)=B_{2}\circ B_{1}(z)$ where $B_{1}(z)=z\frac{z-a_{1}}{1-%
\overline{a}_{1}z}$ and $B_{2}(z)=z\frac{%
(z+a_{2}a_{3})(z+a_{4}a_{5})...(z+a_{2n-2}a_{2n-1})}{(1+\overline{a_{2}a_{3}}%
z)(1+\overline{a_{4}a_{5}}z)...(1+\overline{a_{2n-2}a_{2n-1}}z)}.$ Now we
investigate under what conditions $B\circ M=B$ such that $M$ is different
from the identity for the Blaschke product given in Theorem \ref{thm3}. For $%
n=2$, we can give the following theorem.

\begin{theorem}
Let $a_{1},a_{2}$ and $a_{3}$ be three distinct nonzero complex numbers with
$\left\vert a_{k}\right\vert <1$ for $1\leq k\leq 3$ and $B(z)=z\underset{k=1%
}{\overset{3}{\prod }}\frac{z-a_{k}}{1-\overline{a_{k}}z}$ be a Blaschke
product of degree $4$ with the condition that one of its zeros, say $a_{1},$
satisfies the following equation$:$%
\begin{equation*}
a_{1}+\overline{a_{1}}a_{2}a_{3}=a_{2}+a_{3}.
\end{equation*}%
$(i)$ If $M(z)=-\frac{z-a_{1}}{1-\overline{a}_{1}z}$ then we have $%
B=B_{2}\circ B_{1}$ and $B=B\circ M$ only when $a_{3}=\frac{a_{1}-a_{2}}{1-%
\overline{a}_{1}a_{2}}.$

$(ii)$ Let $M(z)=c\frac{z+\overline{c}a_{1}}{1+c\overline{a}_{1}z}$ with the
conditions $c+\overline{c}=-2\left\vert a_{1}\right\vert ^{2},$ $%
a_{2}=M(a_{1})$ and $a_{3}=M^{2}(a_{1})$. If $a_{1}$ and $c$ with $%
\left\vert c\right\vert =1$ satisfy the following equation
\begin{equation}
a_{1}c+2a_{1}c^{2}\left\vert a_{1}\right\vert ^{2}+a_{1}c^{3}\left\vert
a_{1}\right\vert ^{2}+a_{1}-a_{1}\left\vert a_{1}\right\vert ^{2}=0,
\label{eqn9}
\end{equation}%
then we have $B=B_{2}\circ B_{1}$ and $B=B\circ M.$
\end{theorem}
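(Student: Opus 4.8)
The plan is to treat the two asserted conclusions separately in each part, noting first that the decomposition $B=B_{2}\circ B_{1}$ is essentially free. Indeed, the standing hypothesis $a_{1}+\overline{a_{1}}a_{2}a_{3}=a_{2}+a_{3}$ is precisely the $n=2$ instance of Theorem \ref{thm3}, so in both (i) and (ii) the factorization $B=B_{2}\circ B_{1}$ with $B_{1}(z)=z\frac{z-a_{1}}{1-\overline{a}_{1}z}$ and $B_{2}(z)=z\frac{z+a_{2}a_{3}}{1+\overline{a_{2}a_{3}}z}$ holds with no further argument. Thus the real content of each part is to pin down exactly when one additionally has $B\circ M=B$ for the prescribed $M$.

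For part (i), I would invoke the criterion of Theorem \ref{thm4}: since $B$ and $B\circ M$ are finite Blaschke products of the same degree, $B\circ M=B$ reduces to $M$ permuting the zero set $Z(B)=\{0,a_{1},a_{2},a_{3}\}$, after which Theorem \ref{thm7} (agreement at the four distinct zeros) upgrades this to the identity. A direct evaluation gives $M(0)=a_{1}$ and $M(a_{1})=0$, so $M$ interchanges $0$ and $a_{1}$; moreover $M^{2}=I$, and being a finite-order automorphism of $\mathbb{D}$ it is elliptic with a single fixed point inside $\mathbb{D}$. Because $a_{2}\ne a_{3}$, $M$ cannot fix both, so the only way for $M$ to preserve $Z(B)$ is to interchange $a_{2}$ and $a_{3}$. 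Writing out $M(a_{2})=a_{3}$ then yields $a_{3}=\frac{a_{1}-a_{2}}{1-\overline{a}_{1}a_{2}}$, the claimed condition; conversely, once this equality holds $M$ permutes all four zeros and Theorem \ref{thm7} gives $B\circ M\equiv B$. One also checks without effort that this value of $a_{3}$ is consistent with the standing hypothesis.

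For part (ii), the identity $B\circ M=B$ is immediate: the prescribed data, namely $M(z)=c\frac{z+\overline{c}a_{1}}{1+c\overline{a}_{1}z}$ with $|c|=1$, $c+\overline{c}=-2|a_{1}|^{2}$, together with $a_{2}=M(a_{1})$ and $a_{3}=M^{2}(a_{1})$, is exactly case (b) of Corollary \ref{corol1}, for which $M$ generates the invariant group of $B$, so $B\circ M=B$ holds by construction. What remains is to verify that this case-(b) configuration is compatible with the decomposition hypothesis, and equation (\ref{eqn9}) is that compatibility relation. Concretely, I would compute $a_{2}=M(a_{1})$ and $a_{3}=M^{2}(a_{1})$ explicitly as rational expressions in $a_{1}$ and $c$, simplify them using $c\overline{c}=1$ and $c+\overline{c}=-2|a_{1}|^{2}$, substitute the resulting formulas into $a_{1}+\overline{a_{1}}a_{2}a_{3}=a_{2}+a_{3}$, clear denominators, and reduce with the same two relations until the identity collapses to (\ref{eqn9}).

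The routine algebra of part (i) is harmless; the one genuinely delicate step is the final simplification in part (ii). Here the repeated use of $|c|=1$ (replacing $\overline{c}$ by $1/c$) and of $c+\overline{c}=-2|a_{1}|^{2}$, equivalently $c^{2}+2|a_{1}|^{2}c+1=0$, must be executed carefully so that the common factor in $a_{2}+a_{3}-a_{1}-\overline{a_{1}}a_{2}a_{3}$ cancels cleanly and leaves exactly the polynomial in $c$ and $|a_{1}|^{2}$ recorded in (\ref{eqn9}). This cancellation is where the main risk of error lies, and I would sanity-check the bookkeeping against the explicit degree-$5$ and degree-$7$ analogues already displayed in the paper.
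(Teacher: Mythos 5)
Your proposal is correct, and its overall architecture coincides with the paper's: the factorization $B=B_{2}\circ B_{1}$ is obtained for free from the standing hypothesis, and the real work goes into the condition $B\circ M=B$. Part (ii) is essentially identical to the paper's argument: identify the data with case (b) of Corollary \ref{corol1}, compute $a_{2}=M(a_{1})=\frac{a_{1}(1+c)}{1+c\left\vert a_{1}\right\vert ^{2}}$ and $a_{3}=M^{2}(a_{1})=\frac{a_{1}(1-\left\vert a_{1}\right\vert ^{2})}{1+2c\left\vert a_{1}\right\vert ^{2}+c^{2}\left\vert a_{1}\right\vert ^{2}}$, and substitute into $a_{1}+\overline{a_{1}}a_{2}a_{3}=a_{2}+a_{3}$ to land on (\ref{eqn9}). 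You diverge in two places. First, for part (i) the paper simply cites case (a) of Corollary \ref{corol1} (the Gassier--Chalendar classification of degree-$4$ invariant Blaschke products), which already says that $B\circ M=B$ with this involutive $M$ forces the remaining zero to be $\frac{a_{1}-a_{2}}{1-\overline{a}_{1}a_{2}}$; you instead rederive this from scratch by observing that $M$ swaps $0$ and $a_{1}$, is an involution that cannot fix two distinct points of $\mathbb{D}$, and hence must interchange $a_{2}$ and $a_{3}$. Your version is self-contained and more transparent; the paper's is a one-line citation. Second, for the decomposition the paper routes through Theorem \ref{thm2} and the Poncelet-ellipse criterion, whereas you invoke the $n=2$ case of Theorem \ref{thm3}, which has the advantage of producing the factors $B_{1}$ and $B_{2}$ explicitly. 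One small caution on the sufficiency step you sketch in (i): agreement of $B\circ M$ and $B$ at the four zeros alone only shows the two Blaschke products share their zero set, i.e.\ $B\circ M=c'B$ for some unimodular constant $c'$; Theorem \ref{thm4} demands one additional point of agreement off the zero set to force $c'=1$, and Theorem \ref{thm7} as stated applies to the ``monic'' normal form. The same subtlety occurs in the paper's own converse direction of Theorem \ref{thm6}, and since part (i) only asserts necessity (``only when''), it does not affect the correctness of what you actually prove.
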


\begin{proof}
We use the equation $a_{1}+\overline{a_{1}}a_{2}a_{3}=a_{2}+a_{3}$, Theorem %
\ref{thm3} and Corollary \ref{corol1}.

$(i)$ Let $M(z)=-\frac{z-a_{1}}{1-\overline{a}_{1}z}.$ Then by Corollary \ref%
{corol1}, the condition $B=B\circ M$ implies $a_{3}=\frac{a_{1}-a_{2}}{1-%
\overline{a}_{1}a_{2}}$.

$(ii)$ Let $M(z)=c\frac{z+\overline{c}a_{1}}{1+c\overline{a}_{1}z}.$ From
Corollary \ref{corol1}, it should be $a_{2}=M(a_{1}),$ $a_{3}=M^{2}(a_{1})$
and $c+\overline{c}=-2\left\vert a_{1}\right\vert ^{2}.$ Then we obtain
\begin{equation*}
a_{2}=\frac{a_{1}(1+c)\text{ }}{1+c\left\vert a_{1}\right\vert ^{2}}\text{
and }a_{3}=\frac{a_{1}(1-\left\vert a_{1}\right\vert ^{2})\text{ }}{%
1+2c\left\vert a_{1}\right\vert ^{2}+c^{2}\left\vert a_{1}\right\vert ^{2}}.
\end{equation*}%
If we substitute these values of $a_{2}$ and $a_{3}$ in the equation $a_{1}+%
\overline{a_{1}}a_{2}a_{3}=a_{2}+a_{3},$ we have the following equation
\begin{equation*}
a_{1}c+2a_{1}c^{2}\left\vert a_{1}\right\vert ^{2}+a_{1}c^{3}\left\vert
a_{1}\right\vert ^{2}+a_{1}-a_{1}\left\vert a_{1}\right\vert ^{2}=0.
\end{equation*}

Also in both cases we know that $B$ has a decomposition as $B=B_{2}\circ
B_{1}$ by Theorem \ref{thm2}. Thus the proof is completed.
\end{proof}

Now, we give two examples for the both cases of the above theorem.

\begin{example}
Let $B$ be a Blaschke product and $M$ be a M\"{o}bius transformation of the
following forms:%
\begin{equation*}
B(z)=z\frac{z-a_{1}}{1-\overline{a}_{1}z}\frac{z-a_{2}}{1-\overline{a}_{2}z}%
\frac{z-\left( \frac{a_{1}-a_{2}}{1-\overline{a}_{1}a_{2}}\right) }{%
1-z\left( \frac{\overline{a}_{1}-\overline{a}_{2}}{1-a_{1}\overline{a}_{2}}%
\right) }\text{ and }M(z)=\frac{-z+a_{1}}{1-\overline{a}_{1}z}.
\end{equation*}%
For $a_{1}=\frac{1}{2}$ and $a_{2}=\frac{1}{2}-\frac{i}{2}$ we obtain
\begin{equation*}
B(z)=\frac{z(z-\frac{1}{2})(z-\frac{1}{2}+\frac{i}{2})(z-\frac{1}{5}-\frac{3i%
}{5})}{(-\frac{1}{2}z+1)(1-z(\frac{1}{2}+\frac{i}{2}))(1-z(\frac{1}{5}-\frac{%
3i}{5}))}\text{ and }M(z)=\frac{-z+\frac{1}{2}}{1-\frac{1}{2}z}.
\end{equation*}%
Then we find $(B\circ M)(z)=B(z)$ and $B(z)=(B_{2}\circ B_{1})(z)$ for the
points $z\in \mathbb{D}.$
\end{example}

\begin{example}
\ Let $B$ be a Blaschke product and $M$ be a M\"{o}bius transformation of
the following forms:%
\begin{equation*}
B(z)=z\frac{z-a_{1}}{1-\overline{a_{1}}z}\frac{z-M(a_{1})}{1-\overline{%
M(a_{1})}z}\frac{z-M^{2}(a_{1})}{1-\overline{M^{2}(a_{1})}z}\text{ and }%
M(z)=c\frac{z+\overline{c}a_{1}}{1+c\overline{a_{1}}z}.
\end{equation*}%
For $a_{1}=\frac{2}{3},$ solving the equation $a_{1}c+2a_{1}c^{2}\left\vert
a_{1}\right\vert ^{2}+a_{1}c^{3}\left\vert a_{1}\right\vert
^{2}+a_{1}-a_{1}\left\vert a_{1}\right\vert ^{2}=0$ we find $c=-1.$ Then we
have $B$ and $M$ of the following forms:
\begin{equation*}
B(z)=\frac{z^{2}(-\frac{2}{3}+z)^{2}}{(1-\frac{2}{3}z)^{2}}\text{ and }M(z)=-%
\frac{-\frac{2}{3}+z}{1-\frac{2}{3}z}
\end{equation*}%
Then we find $(B\circ M)(z)=B(z)$ and $B(z)=(B_{2}\circ B_{1})(z)$ for the
points $z\in \mathbb{D}.$
\end{example}

From the above discussions, we can say that decomposition of a finite
Blaschke product $B$ is linked with its zeros. But for a finite Blaschke
product $B$ of degree $4$, this case is also linked with the Poncelet curve
of $B.$

Using Theorem \ref{thm6} and Theorem \ref{thm3}, for the Blaschke product of
degree $2n$, we give the following result.

\begin{corollary}
\label{thm8}Let $a_{1},a_{2},...,a_{2n-1}$ be $2n-1$ distinct nonzero
complex numbers with $\left\vert a_{k}\right\vert <1$ for $1\leq k\leq 2n-1$
and $B\left( z\right) =z\underset{k=1}{\overset{2n-1}{\prod }}\frac{z-a_{k}}{%
1-\overline{a_{k}}z}$ be a Blaschke products of degree $2n$ with the
condition that one of its zeros, say $a_{1}$, satisfies the following
equations:%
\begin{eqnarray*}
a_{1}+\overline{a_{1}}a_{2}a_{3} &=&a_{2}+a_{3} \\
a_{1}+\overline{a_{1}}a_{4}a_{5} &=&a_{4}+a_{5} \\
&&\cdots \\
a_{1}+\overline{a_{1}}a_{2n-2}a_{2n-1} &=&a_{2n-2}+a_{2n-1}.
\end{eqnarray*}%
Let $M\left( z\right) =c\frac{z-a_{2n-1}}{1-z\overline{a_{2n-1}}}$ with the
conditions $\left\vert c\right\vert =1,$ $M^{2n-1}\left( 0\right)
-a_{2n-1}=0,$ $a_{1}=M\left( 0\right) ,a_{2}=M^{2}\left( 0\right)
,...,a_{2n-1}=M^{2n-1}\left( 0\right) $. If $a_{2n-1}$ and $c$ satisfy
following equations:%
\begin{eqnarray*}
M\left( 0\right) +\overline{M\left( 0\right) }M^{2}\left( 0\right)
M^{3}\left( 0\right) &=&M^{2}\left( 0\right) +M^{3}\left( 0\right) \\
M\left( 0\right) +\overline{M\left( 0\right) }M^{4}\left( 0\right)
M^{5}\left( 0\right) &=&M^{4}\left( 0\right) +M^{5}\left( 0\right) \\
&&\cdots \\
M\left( 0\right) +\overline{M\left( 0\right) }M^{2n-2}\left( 0\right)
M^{2n-1}\left( 0\right) &=&M^{2n-2}\left( 0\right) +M^{2n-1}\left( 0\right)
\end{eqnarray*}%
Then we have $B=B_{2}\circ B_{1}$ and $B\circ M=B$.
\end{corollary}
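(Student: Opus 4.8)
The plan is to prove the two conclusions separately, invoking Theorem \ref{thm3} for the decomposition and Proposition \ref{prop4} (equivalently Theorem \ref{thm6}) for the invariance, and to observe that the single identification $a_{k}=M^{k}(0)$ converts the hypotheses of one result into the form demanded by the other.

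For the decomposition $B=B_{2}\circ B_{1}$, I would first substitute $a_{1}=M(0)$, $a_{2j}=M^{2j}(0)$ and $a_{2j+1}=M^{2j+1}(0)$ into the displayed system of the statement. Under this substitution the listed equations become precisely the relations
\[
a_{1}+\overline{a_{1}}a_{2j}a_{2j+1}=a_{2j}+a_{2j+1},\qquad 1\leq j\leq n-1,
\]
which are exactly the hypotheses of Theorem \ref{thm3}. Hence Theorem \ref{thm3} applies, and the explicit factorisation recorded immediately after its statement yields $B=B_{2}\circ B_{1}$ with $B_{1}(z)=z\frac{z-a_{1}}{1-\overline{a_{1}}z}$ and the corresponding $B_{2}$.

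For the invariance $B\circ M=B$, the key observation is that $M$ vanishes at its own parameter: from $M(z)=c\frac{z-a_{2n-1}}{1-\overline{a_{2n-1}}z}$ one reads off $M(a_{2n-1})=0$. Combined with the hypothesis $M^{2n-1}(0)=a_{2n-1}$ this gives
\[
M^{2n}(0)=M\bigl(M^{2n-1}(0)\bigr)=M(a_{2n-1})=0 .
\]
Since by hypothesis the $2n$ points $0,M(0),\dots,M^{2n-1}(0)$ coincide with the distinct zeros $0,a_{1},\dots,a_{2n-1}$ of $B$, the pair $(M,B)$ satisfies exactly the conditions of Proposition \ref{prop4}. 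Therefore the invariant group of $B$ is generated by $M$, and in particular $B\circ M=B$; the same conclusion follows from Theorem \ref{thm6} applied in degree $2n$.

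I expect the only real work to be bookkeeping: checking that the two hypothesis sets, one governing the decomposition and one governing the invariance, are imposed consistently on the single map $M$, and that the index ranges in the rewritten equations match those of Theorem \ref{thm3}. Once $M^{2n}(0)=0$ is verified and the orbit of $0$ is identified with $Z(B)$, both conclusions follow immediately from the cited results, so there is no genuine analytic obstacle beyond this matching.
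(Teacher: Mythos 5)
Your proposal is correct and follows essentially the same route as the paper, whose proof is the single line that the result is ``obvious from Theorem \ref{thm6} and Theorem \ref{thm3}'': you likewise get the decomposition from Theorem \ref{thm3} after identifying $a_{k}=M^{k}(0)$, and the invariance from the orbit structure $M^{2n}(0)=M(a_{2n-1})=0$. Your substitution of Proposition \ref{prop4} for Theorem \ref{thm6} in the invariance step is an inessential variant (and arguably cleaner, since it avoids the awkwardly stated condition \eqref{eqn6}), and your write-up supplies the bookkeeping the paper omits.
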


\begin{proof}
The proof is obvious from Theorem \ref{thm6} and Theorem \ref{thm3}.
\end{proof}

\begin{example}
Let $B$ be a Blaschke product and $M$ be a M\"{o}bius transformation of the
following forms:%
\begin{equation*}
B(z)=z\frac{z-M(0)}{1-\overline{M(0)}z}\frac{z-M^{2}(0)}{1-\overline{M^{2}(0)%
}z}\frac{z-M^{3}(0)}{1-\overline{M^{3}(0)}z}\frac{z-M^{4}(0)}{1-\overline{%
M^{4}(0)}z}\text{ }\frac{z-M^{5}(0)}{1-\overline{M^{5}(0)}z}
\end{equation*}%
and
\begin{equation*}
M(z)=c\frac{z-a_{5}}{1-\overline{a_{5}}z}.
\end{equation*}%
From Corollary \ref{thm8}, it should be
\begin{equation}
M\left( 0\right) +\overline{M\left( 0\right) }M^{2}\left( 0\right)
M^{3}\left( 0\right) =M^{2}\left( 0\right) +M^{3}\left( 0\right)
\label{eqn16}
\end{equation}%
and%
\begin{equation}
M\left( 0\right) +\overline{M\left( 0\right) }M^{4}\left( 0\right)
M^{5}\left( 0\right) =M^{4}\left( 0\right) +M^{5}\left( 0\right) \text{.}
\label{eqn17}
\end{equation}%
We know that $a_{1}=M\left( 0\right) =-ca_{5},a_{2}=M^{2}\left( 0\right)
=-ca_{5}\frac{\left( 1+c\right) }{1+c\left\vert a_{5}\right\vert ^{2}}%
,a_{3}=M^{3}\left( 0\right) =-ca_{5}\frac{\left( 1+c+c^{2}+c\left\vert
a_{5}\right\vert ^{2}\right) }{1+2c\left\vert a_{5}\right\vert
^{2}+c^{2}\left\vert a_{5}\right\vert ^{2}}$ and $a_{4}=M^{4}\left( 0\right)
=-ca_{5}\frac{\left( 1+c\right) \left( 1+c^{2}+2c\left\vert a_{5}\right\vert
^{2}\right) }{1+c\left\vert a_{5}\right\vert ^{2}\left(
3+2c+c^{2}+c\left\vert a_{5}\right\vert ^{2}\right) }.$ Writing these values
in the equations $($\ref{eqn16}$)$ and $($\ref{eqn17}$)$, we have
\begin{equation}
\begin{array}{l}
ca_{5}+2c^{2}a_{5}+c^{3}a_{5}+c^{3}a_{5}\left\vert a_{5}\right\vert
^{2}+c^{4}a_{5}\left\vert a_{5}\right\vert ^{2}-2c^{3}a_{5}\left\vert
a_{5}\right\vert ^{4} \\
-c^{4}a_{5}\left\vert a_{5}\right\vert ^{4}-ca_{5}\left\vert
a_{5}\right\vert ^{2}-c^{2}a_{5}\left\vert a_{5}\right\vert
^{2}-c^{2}a_{5}\left\vert a_{5}\right\vert ^{4}=0%
\end{array}
\label{eqn18}
\end{equation}

and%
\begin{equation}
\begin{array}{l}
a_{5}-c^{2}a_{5}-c^{3}a_{5}-c^{4}a_{5}+3ca_{5}\left\vert a_{5}\right\vert
^{2}+c^{2}a_{5}\left\vert a_{5}\right\vert ^{4}+2c^{2}a_{5}\left\vert
a_{5}\right\vert ^{2}+c^{4}a_{5}\left\vert a_{5}\right\vert ^{2} \\
+c^{3}a_{5}\left\vert a_{5}\right\vert ^{4}-ca_{5}\left\vert
a_{5}\right\vert ^{2}-2c^{2}a_{5}\left\vert a_{5}\right\vert
^{4}-a_{5}\left\vert a_{5}\right\vert ^{2}-2ca_{5}\left\vert
a_{5}\right\vert ^{4}=0\text{.}%
\end{array}
\label{eqn19}
\end{equation}%
For $a_{5}=\frac{1}{2},$ solving the equations $($\ref{eqn18}$)$ and $($\ref%
{eqn19}$)$ we find $c=-1.$ Then we have $B$ and $M$ of the following forms:%
\begin{equation*}
B(z)=z\frac{\left( 2z-1\right) \left( z-0.5\right) ^{2}\left( z-1.4803\times
10^{-16}\right) \left( z-7.40149\times 10^{-17}\right) }{\left( 2-z\right)
\left( 1-0.5z\right) ^{2}\left( 1-1.4803\times 10^{-16}z\right) \left(
1-7.40149\times 10^{-17}z\right) }
\end{equation*}%
and
\begin{equation*}
M(z)=\frac{2z-1}{z-2}.
\end{equation*}%
Then for the points $z\in \mathbb{D}$, we find
\begin{equation*}
(B\circ M)(z)=B(z)\text{ and }B(z)=(B_{2}\circ B_{1})(z)
\end{equation*}%
where
\begin{equation*}
B_{1}(z)=z\frac{z-0.5}{1-0.5z}\text{ and }B_{2}(z)=z\frac{z+3.70074\times
10^{-17}}{1+3.70074\times 10^{-17}z}\frac{z+7.40149\times 10^{-17}}{%
1+7.40149\times 10^{-17}z}.
\end{equation*}
\end{example}

\begin{corollary}
\label{corol2}Let $a_{1},a_{2},...,a_{3n-1}$ be $3n-1$ distinct nonzero
complex numbers with $\left\vert a_{k}\right\vert <1$ for $1\leq k\leq 3n-1$
and $B\left( z\right) =z\underset{k=1}{\overset{3n-1}{\prod }}\frac{z-a_{k}}{%
1-\overline{a_{k}}z}$ be a Blaschke products of degree $3n$ with the
condition that one of its zeros, say $a_{1}$, satisfies the following
equations:%
\begin{eqnarray*}
a_{1}+a_{2}+a_{3}a_{4}a_{5}\overline{a_{1}a_{2}} &=&a_{3}+a_{4}+a_{5}, \\
a_{1}a_{2}+a_{3}a_{4}a_{5}\left( \overline{a_{1}}+\overline{a_{2}}\right)
&=&a_{3}a_{4}+a_{3}a_{5}+a_{4}a_{5}, \\
&&\cdots \\
a_{1}+a_{2}+a_{3n-3}a_{3n-2}a_{3n-1}\overline{a_{1}a_{2}}
&=&a_{3n-3}+a_{3n-2}+a_{3n-1}, \\
a_{1}a_{2}+a_{3n-3}a_{3n-2}a_{3n-1}\left( \overline{a_{1}}+\overline{a_{2}}%
\right) &=&a_{3n-3}a_{3n-2}+a_{3n-3}a_{3n-1}+a_{3n-2}a_{3n-1}.
\end{eqnarray*}%
Let $M\left( z\right) =c\frac{z-a_{3n-1}}{1-z\overline{a_{3n-1}}}$ with the
conditions $\left\vert c\right\vert =1,$ $M^{3n-1}\left( 0\right)
-a_{3n-1}=0,$ $a_{1}=M\left( 0\right) ,a_{2}=M^{2}\left( 0\right)
,...,a_{3n-1}=M^{3n-1}\left( 0\right) $. If $a_{3n-1}$ and $c$ satisfy
following equations:%
\begin{equation*}
\begin{array}{l}
M\left( 0\right) +M^{2}\left( 0\right) +M^{3}\left( 0\right) M^{4}\left(
0\right) M^{5}\left( 0\right) \overline{M\left( 0\right) M^{2}\left(
0\right) } \\
=M^{3}\left( 0\right) +M^{4}\left( 0\right) +M^{5}\left( 0\right) , \\
M\left( 0\right) M^{2}\left( 0\right) +M^{3}\left( 0\right) M^{4}\left(
0\right) M^{5}\left( 0\right) \left( \overline{M\left( 0\right) }+\overline{%
M^{2}\left( 0\right) }\right) \\
=M^{3}\left( 0\right) M^{4}\left( 0\right) +M^{3}\left( 0\right) M^{5}\left(
0\right) +M^{4}\left( 0\right) M^{5}\left( 0\right) , \\
\cdots \\
M\left( 0\right) +M^{2}\left( 0\right) +M^{3n-3}\left( 0\right)
M^{3n-2}\left( 0\right) M^{3n-1}\left( 0\right) \overline{M\left( 0\right)
M^{2}\left( 0\right) } \\
=M^{3n-3}\left( 0\right) +M^{3n-2}\left( 0\right) +M^{3n-1}\left( 0\right) ,
\\
M\left( 0\right) M^{2}\left( 0\right) +M^{3n-3}\left( 0\right)
M^{3n-2}\left( 0\right) M^{3n-1}\left( 0\right) \left( \overline{M\left(
0\right) }+\overline{M^{2}\left( 0\right) }\right) \\
=M^{3n-3}\left( 0\right) M^{3n-2}\left( 0\right) +M^{3n-3}\left( 0\right)
M^{3n-1}\left( 0\right) +M^{3n-2}\left( 0\right) M^{3n-1}\left( 0\right) .%
\end{array}%
\end{equation*}%
Then we have $B=B_{2}\circ B_{1}$ and $B\circ M=B$.
\end{corollary}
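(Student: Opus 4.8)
The plan is to establish the two conclusions $B\circ M=B$ and $B=B_{2}\circ B_{1}$ separately, in the same spirit as the degree-$2n$ case of Corollary \ref{thm8}. The invariance will come straight from Theorem \ref{thm6} applied in degree $3n$, while the decomposition will come from recognising each admissible triple of zeros as a complete fibre of a cubic inner factor $B_{1}$, the degree-$3$ analogue of the quadratic fibre argument underlying Theorem \ref{thm3}.

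First I would check the hypotheses of Theorem \ref{thm6} with $n$ replaced by $3n$. Since the numerator of $M(z)=c\dfrac{z-a_{3n-1}}{1-\overline{a_{3n-1}}z}$ vanishes at $a_{3n-1}$, the assumption $M^{3n-1}(0)=a_{3n-1}$ gives $M^{3n}(0)=M(a_{3n-1})=0$; together with the distinctness of $\{0,M(0),\dots,M^{3n-1}(0)\}$ this is exactly the setting of Proposition \ref{prop4}, so $Z(B)=\{0,M(0),\dots,M^{3n-1}(0)\}$ and $M$ generates the invariance group. From $a_{1}=M(0)=-c\,a_{3n-1}$ I read off $c=-a_{1}/a_{3n-1}$ and hence $|a_{1}|=|a_{3n-1}|$; thus the conditions of Theorem \ref{thm6} hold with $a_{j}=a_{1}$ and $a_{l}=a_{3n-1}$, and equation \eqref{eqn6} is the assumed relation $M^{3n-1}(0)-a_{3n-1}=0$. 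Theorem \ref{thm6} then yields $B\circ M=B$.

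For the decomposition I would put
\[
B_{1}(z)=z\,\frac{z-a_{1}}{1-\overline{a_{1}}z}\,\frac{z-a_{2}}{1-\overline{a_{2}}z},\qquad B_{2}(z)=z\prod_{i=1}^{n-1}\frac{z-a_{3i}a_{3i+1}a_{3i+2}}{1-\overline{a_{3i}a_{3i+1}a_{3i+2}}\,z},
\]
so that $\deg B_{1}=3$ and $\deg B_{2}=n$. The crucial observation is that for each $i$ the equation $B_{1}(z)=w$ becomes, after clearing denominators, the cubic
\[
z(z-a_{1})(z-a_{2})-w\,(1-\overline{a_{1}}z)(1-\overline{a_{2}}z)=0,
\]
and equating the coefficients of $z^{2}$ and $z^{1}$ turns the Vieta relations for its three roots into precisely the two symmetric-function identities attached to the triple $a_{3i},a_{3i+1},a_{3i+2}$, with $w_{i}=a_{3i}a_{3i+1}a_{3i+2}$ read off as the product of the roots. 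Hence each such triple is the full fibre $B_{1}^{-1}(w_{i})$, while $\{0,a_{1},a_{2}\}=B_{1}^{-1}(0)$. It follows that $B_{2}\circ B_{1}$ is a Blaschke product of degree $3n$ whose zero set equals $\{0,a_{1},a_{2}\}\cup\bigcup_{i=1}^{n-1}\{a_{3i},a_{3i+1},a_{3i+2}\}=Z(B)$; therefore $B_{2}\circ B_{1}$ and $B$ coincide up to a unimodular constant, which may be absorbed into $B_{2}$. Equivalently, once the leading unimodular factor is normalised the two products agree at the $3n$ distinct points of $Z(B)$, so Theorem \ref{thm7} forces $B=B_{2}\circ B_{1}$.

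The main obstacle will be the middle computation: verifying that the pair of symmetric-function equations listed in the statement coincides term by term with the coefficient comparison for the cubic $B_{1}(z)=w$. This is a direct but somewhat lengthy expansion, and the only point requiring care is the distinctness hypothesis on $a_{1},\dots,a_{3n-1}$, which guarantees that the $n$ fibres $\{0,a_{1},a_{2}\}$ and $\{a_{3i},a_{3i+1},a_{3i+2}\}$ are pairwise disjoint, so that $B_{2}$ genuinely has $n$ distinct zeros and the multiplicities in the two factorisations match. Once this identification is in place, both conclusions assemble immediately, exactly as in Corollary \ref{thm8}.
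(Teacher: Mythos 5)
Your proof is correct and follows essentially the same route as the paper: you use the identical decomposition (the cubic $B_{1}$ with zeros $0,a_{1},a_{2}$ and the degree-$n$ factor $B_{2}$ whose zeros are the triple products $a_{3i}a_{3i+1}a_{3i+2}$), and you obtain the invariance $B\circ M=B$ from Theorem \ref{thm6} exactly as the paper does. The only difference is that where the paper simply cites Theorem $4.4$ of \cite{NYO1} for the decomposition, you reprove that step directly by identifying each admissible triple as a full fibre of $B_{1}$ via Vieta's formulas, which makes the argument self-contained but is not a genuinely different method.
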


\begin{proof}
Let $a_{1},a_{2},...,a_{3n-1}$ be $3n-1$ distinct nonzero complex numbers
with $\left\vert a_{k}\right\vert <1$ for $1\leq k\leq 3n-1$ and $B\left(
z\right) =z\underset{k=1}{\overset{3n-1}{\prod }}\frac{z-a_{k}}{1-\overline{%
a_{k}}z}$ be a Blaschke product of degree $3n$ with the condition that two
of its zeros, say $a_{1}$ and $a_{2},$ satisfies the following equations:
\begin{eqnarray*}
a_{1}+a_{2}+a_{3}a_{4}a_{5}\overline{a_{1}a_{2}} &=&a_{3}+a_{4}+a_{5}, \\
a_{1}a_{2}+a_{3}a_{4}a_{5}\left( \overline{a_{1}}+\overline{a_{2}}\right)
&=&a_{3}a_{4}+a_{3}a_{5}+a_{4}a_{5}, \\
a_{1}+a_{2}+a_{6}a_{7}a_{8}\overline{a_{1}a_{2}} &=&a_{6}+a_{7}+a_{8}, \\
a_{1}a_{2}+a_{6}a_{7}a_{8}\left( \overline{a_{1}}+\overline{a_{2}}\right)
&=&a_{6}a_{7}+a_{6}a_{8}+a_{7}a_{8}, \\
&&\cdots \\
a_{1}+a_{2}+a_{3n-3}a_{3n-2}a_{3n-1}\overline{a_{1}a_{2}}
&=&a_{3n-3}+a_{3n-2}+a_{3n-1}, \\
a_{1}a_{2}+a_{3n-3}a_{3n-2}a_{3n-1}\left( \overline{a_{1}}+\overline{a_{2}}%
\right) &=&a_{3n-3}a_{3n-2}+a_{3n-3}a_{3n-1}+a_{3n-2}a_{3n-1}.
\end{eqnarray*}%
By Theorem $4.4$ on page $71$ in \cite{NYO1}, we know that B$\left( z\right)
$ can be written as a composition of two Blaschke products of degree $3$ and
$n$ as $B\left( z\right) =\left( B_{2}\circ B_{1}\right) \left( z\right) $
where
\begin{equation*}
B_{1}\left( z\right) =\frac{z\left( z-a_{1}\right) (z-a_{2})}{\left( 1-%
\overline{a_{1}}z\right) \left( 1-\overline{a_{2}}z\right) }
\end{equation*}%
and%
\begin{equation*}
B_{2}\left( z\right) =\frac{z\left( z-a_{3}a_{4}a_{5}\right) \left(
z-a_{6}a_{7}a_{8}\right) ...\left( z-a_{3n-3}a_{3n-2}a_{3n-1}\right) }{%
\left( 1-\overline{a_{3}a_{4}a_{5}}z\right) \left( 1-\overline{%
a_{6}a_{7}a_{8}}z\right) ...\left( 1-\overline{a_{3n-3}a_{3n-2}a_{3n-1}}%
z\right) }
\end{equation*}%
Then, the rest of the proof is clear from Theorem \ref{thm6}.
\end{proof}

\begin{example}
Let $B$ be a Blaschke product and $M$ be a M\"{o}bius transformation of the
following forms:%
\begin{equation*}
B(z)=z\frac{z-M(0)}{1-\overline{M(0)}z}\frac{z-M^{2}(0)}{1-\overline{M^{2}(0)%
}z}\frac{z-M^{3}(0)}{1-\overline{M^{3}(0)}z}\frac{z-M^{4}(0)}{1-\overline{%
M^{4}(0)}z}\text{ }\frac{z-M^{5}(0)}{1-\overline{M^{5}(0)}z}
\end{equation*}%
and%
\begin{equation*}
M(z)=c\frac{z-a_{5}}{1-\overline{a_{5}}z}.
\end{equation*}%
From Corollary \ref{corol2}, it should be%
\begin{equation}
\begin{array}{c}
M\left( 0\right) +M^{2}\left( 0\right) +M^{3}\left( 0\right) M^{4}\left(
0\right) M^{5}\left( 0\right) \overline{M\left( 0\right) M^{2}\left(
0\right) } \\
=M^{3}\left( 0\right) +M^{4}\left( 0\right) +M^{5}\left( 0\right)%
\end{array}
\label{eqn20}
\end{equation}%
and%
\begin{equation}
\begin{array}{c}
M\left( 0\right) M^{2}\left( 0\right) +M^{3}\left( 0\right) M^{4}\left(
0\right) M^{5}\left( 0\right) \left( \overline{M\left( 0\right) }+\overline{%
M^{2}\left( 0\right) }\right) \\
=M^{3}\left( 0\right) M^{4}\left( 0\right) +M^{3}\left( 0\right) M^{5}\left(
0\right) +M^{4}\left( 0\right) M^{5}\left( 0\right) .%
\end{array}
\label{eqn21}
\end{equation}%
We know that $a_{1}=M\left( 0\right) =-ca_{5},a_{2}=M^{2}\left( 0\right)
=-ca_{5}\frac{\left( 1+c\right) }{1+c\left\vert a_{5}\right\vert ^{2}}%
,a_{3}=M^{3}\left( 0\right) =-ca_{5}\frac{\left( 1+c+c^{2}+c\left\vert
a_{5}\right\vert ^{2}\right) }{1+2c\left\vert a_{5}\right\vert
^{2}+c^{2}\left\vert a_{5}\right\vert ^{2}}$ and $a_{4}=M^{4}\left( 0\right)
=-ca_{5}\frac{\left( 1+c\right) \left( 1+c^{2}+2c\left\vert a_{5}\right\vert
^{2}\right) }{1+c\left\vert a_{5}\right\vert ^{2}\left(
3+2c+c^{2}+c\left\vert a_{5}\right\vert ^{2}\right) }.$ Writing these values
in the equations $($\ref{eqn20}$)$ and $($\ref{eqn21}$)$, we have%

\begin{equation}
\footnotesize
\begin{array}{l}
-ca_{5}\left( 1+c\left\vert a_{5}\right\vert ^{2}\right) \left(
1+2c\left\vert a_{5}\right\vert ^{2}+c^{2}\left\vert a_{5}\right\vert
^{2}\right) \left( 1+c\left\vert a_{5}\right\vert ^{2}\left(
3+2c+c^{2}+c\left\vert a_{5}\right\vert ^{2}\right) \right) \left( 1+%
\overline{c}\left\vert a_{5}\right\vert ^{2}\right) \\
-ca_{5}\left( 1+c\right) \left( 1+2c\left\vert a_{5}\right\vert
^{2}+c^{2}\left\vert a_{5}\right\vert ^{2}\right) \left( 1+c\left\vert
a_{5}\right\vert ^{2}\left( 3+2c+c^{2}+c\left\vert a_{5}\right\vert
^{2}\right) \right) \left( 1+\overline{c}\left\vert a_{5}\right\vert
^{2}\right) \\
+a_{5}\left\vert a_{5}\right\vert ^{4}\left( 1+c+c^{2}+c\left\vert
a_{5}\right\vert ^{2}\right) \left( 1+c\right) \left( 1+c^{2}+2c\left\vert
a_{5}\right\vert ^{2}\right) \left( 1+\overline{c}\right) \left(
1+c\left\vert a_{5}\right\vert ^{2}\right) \\
+ca_{5}\left( 1+c+c^{2}+c\left\vert a_{5}\right\vert ^{2}\right) \left(
1+c\left\vert a_{5}\right\vert ^{2}\right) \left( 1+c\left\vert
a_{5}\right\vert ^{2}\left( 3+2c+c^{2}+c\left\vert a_{5}\right\vert
^{2}\right) \right) \left( 1+\overline{c}\left\vert a_{5}\right\vert
^{2}\right) \\
+ca_{5}\left( 1+c\right) \left( 1+c^{2}+2c\left\vert a_{5}\right\vert
^{2}\right) \left( 1+c\left\vert a_{5}\right\vert \right) ^{2}\left(
1+2c\left\vert a_{5}\right\vert ^{2}+c^{2}\left\vert a_{5}\right\vert
^{2}\right) \left( 1+\overline{c}\left\vert a_{5}\right\vert \right) ^{2} \\
-a_{5}\left( 1+c\left\vert a_{5}\right\vert ^{2}\right) \left(
1+2c\left\vert a_{5}\right\vert ^{2}+c^{2}\left\vert a_{5}\right\vert
^{2}\right) \left( 1+c\left\vert a_{5}\right\vert ^{2}\left(
3+2c+c^{2}+c\left\vert a_{5}\right\vert ^{2}\right) \right) \left( 1+%
\overline{c}\left\vert a_{5}\right\vert ^{2}\right) =0.%
\end{array}
\label{eqn22}
\end{equation}%
and%
\begin{equation}
\footnotesize
\begin{array}{l}
c^{2}a_{5}^{2}\left( 1+c\right) \left( 1+2c\left\vert a_{5}\right\vert
^{2}+c^{2}\left\vert a_{5}\right\vert ^{2}\right) \left( 1+c\left\vert
a_{5}\right\vert ^{2}\left( 3+2c+c^{2}+c\left\vert a_{5}\right\vert
^{2}\right) \right) \left( 1+\overline{c}\left\vert a_{5}\right\vert
^{2}\right) \left( 1+\overline{c}\left\vert a_{5}\right\vert ^{2}\right) \\
+c^{2}a_{5}^{3}\left( 1+c+c^{2}+c\left\vert a_{5}\right\vert ^{2}\right)
\left( 1+c\right) \left( 1+c^{2}+2c\left\vert a_{5}\right\vert ^{2}\right)
\left( -\overline{c}\overline{a_{5}}\left( 1+\overline{c}\left\vert
a_{5}\right\vert ^{2}\right) -\overline{c}\overline{a_{5}}\left( 1+\overline{%
c}\right) \right) \left( 1+c\left\vert a_{5}\right\vert ^{2}\right) \\
-c^{2}a_{5}^{2}\left( 1+c+c^{2}+c\left\vert a_{5}\right\vert ^{2}\right)
\left( 1+c\right) \left( 1+c^{2}+2c\left\vert a_{5}\right\vert ^{2}\right)
\left( 1+\overline{c}\left\vert a_{5}\right\vert ^{2}\right) \left(
1+c\left\vert a_{5}\right\vert ^{2}\right) \\
+ca_{5}^{2}\left( 1+c+c^{2}+c\left\vert a_{5}\right\vert ^{2}\right) \left(
1+c\left\vert a_{5}\right\vert ^{2}\right) \left( 1+c\left\vert
a_{5}\right\vert ^{2}\left( 3+2c+c^{2}+c\left\vert a_{5}\right\vert
^{2}\right) \right) \left( 1+\overline{c}\left\vert a_{5}\right\vert
^{2}\right) \\
+ca_{5}^{2}\left( 1+c\right) \left( 1+c^{2}+2c\left\vert a_{5}\right\vert
^{2}\right) \left( 1+c\left\vert a_{5}\right\vert ^{2}\right) \left(
1+2c\left\vert a_{5}\right\vert ^{2}+c^{2}\left\vert a_{5}\right\vert
^{2}\right) \left( 1+\overline{c}\left\vert a_{5}\right\vert ^{2}\right) =0.%
\end{array}
\label{eqn23}
\end{equation}

For $a_{5}=\frac{1}{2},$ solving the equations $($\ref{eqn22}$)$ and $($\ref%
{eqn23}$)$ we find $c=-0.625+i0.780625$ Then we have $B$ and $M$ of the
following forms:%

\begin{equation}
\tiny
B(z)=z\frac{\left( 2z-1\right) \left( z-0.5+6.39697\times 10^{-11}i\right)
\left( z-0.3125+0.390312i\right) ^{2}\left( z-2.51094\times
10^{-11}-1.05107\times 10^{-10}i\right) }{\left( 2-z\right) \left( 1-z\left(
0.5+6.39697\times 10^{-11}i\right) \right) \left( 1-z\left(
0.3125+0.390312i\right) \right) ^{2}\left( 1-z\left( 2.51094\times
10^{-11}-1.05107\times 10^{-10}i\right) \right) }\text{ }
\end{equation}%

and%
\begin{equation*}
M(z)=\left( 0.625-0.780625i\right) \frac{1-2z}{2-z}.\text{ }
\end{equation*}%
Then for the points $z\in \mathbb{D}$, we find%
\begin{equation*}
(B\circ M)(z)=B(z)\text{ and }B(z)=(B_{2}\circ B_{1})(z)
\end{equation*}%
where%
\begin{equation*}
B_{1}(z)=z\frac{\left( z-0.5+6.39697\times 10^{-11}i\right) \left(
z-0.3125+0.390312i\right) }{\left( 1-z\left( 0.5+6.39697\times
10^{-11}i\right) \right) \left( 1-z\left( 0.3125+0.390312i\right) \right) }%
\text{ }
\end{equation*}%
and%
\begin{equation*}
B_{2}(z)=z\frac{z-2.44357\times 10^{-11}-1.15227\times 10^{-11}i}{1-z\left(
2.44357\times 10^{-11}-1.15227\times 10^{-11}i\right) }\text{.}
\end{equation*}
\end{example}

\end{document}